\numberwithin{equation}{section}
\newtheorem{theorem}{Theorem}[section] 
\newtheorem{proposition}[theorem]{Proposition} 
\newtheorem{lemma}[theorem]{Lemma} 
\newtheorem{corollary}[theorem]{Corollary} 
\theoremstyle{definition} 
\newtheorem{remark}[theorem]{Remark} 
\newtheorem{problem}[theorem]{Problem}
\newcommand\bA{\mathbf{A}}
\newcommand\Ecal{\mathcal{E}}
\newcommand\Pcal{\mathcal{P}}
\newcommand\Ascr{\mathscr{A}} 
\newcommand\Cscr{\mathscr{C}}
\newcommand\Gscr{\mathscr{G}}
\newcommand\Oscr{\mathscr{O}}
\newcommand\C{\mathbb{C}} 
\newcommand\CP{\mathbb{CP}}
\newcommand\N{\mathbb{N}} 
\newcommand\R{\mathbb{R}}
\newcommand\Z{\mathbb{Z}} 
\newcommand\igot{\mathfrak{i}}
\renewcommand\igot{\mathfrak{i}}
\newcommand\qgot{\mathfrak{q}}
\renewcommand\imath{\igot}
\newcommand\hra{\hookrightarrow}
\newcommand\longhookrightarrow{\ensuremath{\lhook\joinrel\relbar\joinrel\rightarrow}} 
\newcommand\wt{\widetilde} 
\newcommand\di{\partial} 
\newcommand\dibar{\overline\partial}
\newcommand\CMI{\mathrm{CMI}} 
\newcommand\TC{\mathrm{TC}}
\newcommand\supp{\mathrm{supp}}
\def\br{\mathrm{br}} 
\def\s{\mathrm{s}} 
\def\f{\mathrm{f}} 
\def\rank{\mathrm{rank}} 
\def\Ell1{\mathrm{Ell_1}} 
\def\CEll1{\mathrm{CEll_1}}
\begin{document} 

\title{Removing singularities of minimal surfaces by isotopies} 

\author{Antonio Alarc\'on and Franc Forstneri\v c}

\address{Franc Forstneri\v c, Faculty of Mathematics and Physics, University of Ljubljana, Jadranska 19, SI--1000 Ljubljana, Slovenia}

\address{Franc Forstneri\v c, Institute of Mathematics, Physics and Mechanics, Jadranska 19, SI--1000 Ljubljana, Slovenia}

\email{franc.forstneric@fmf.uni-lj.si}

\address{Antonio Alarc\'on, Departamento de Geometr\'{\i}a y Topolog\'{\i}a e Instituto de Matem\'aticas (IMAG), Universidad de Granada, Campus de Fuentenueva s/n, E--18071 Granada, Spain.}

\email{alarcon@ugr.es}

\subjclass[2020]{Primary 53A10; Secondary 53C42, 32E30, 30F30}

\date{18 November 2025}

\keywords{Riemann surface, minimal surface, branch point, complete end of finite total curvature} 

\begin{abstract}
Given an open Riemann surface $M$, we show that the branch points 
and the complete ends of finite total curvature of a conformal minimal surface
$M\to\R^n$, $n\ge 3$, can be removed by an isotopy through such surfaces. 
The analogous result holds for null holomorphic curves $M\to\C^n$.
\end{abstract}

\maketitle

%%%
%%% Introduction
%%%

\section{Introduction}\label{sec:intro}

\noindent
Let $M$ be a connected open Riemann surface. 
Recall that an immersion $u=(u_1,\ldots,u_n):M\to\R^n$ $(n\ge 3)$ is 
{\em conformal} (angle preserving) if and only if its $(1,0)$-differential 
$\partial u=(\partial u_1,\ldots,\partial u_n)$ (the $\C$-linear part of 
the differential $du=\di u+\dibar u$) satisfies the nullity condition
\begin{equation}\label{eq:nullity}
	(\partial u_1)^2+\cdots+(\partial u_n)^2=0. 
\end{equation}
(See e.g.\ \cite{Osserman1986} or \cite{AlarconForstnericLopez2021}.)
A conformal immersion $u:M\to\R^n$ parametrises a minimal surface 
in $\R^n$ with the Euclidean metric $ds^2$ if and only if it is harmonic, 
if and only if $\di u$ is a holomorphic $1$-form. 
Assuming that this holds and choosing a nowhere 
vanishing holomorphic $1$-form $\theta$ in $M$, we have 
$2\di u=f\theta$ where $f:M\to\C^n$ is a holomorphic map with values
in the punctured null quadric $\bA_*=\bA\setminus \{0\}$, where 
\begin{equation}\label{eq:nullq}
	\bA=\{z=(z_1,\ldots,z_n)\in\C^n : z_1^2+\cdots+z_n^2=0\}.
\end{equation}
Given any point $x_0\in M$, we recover $u$ from its 
Weierstrass data $f\theta$ by
\begin{equation}\label{eq:W-formula}
	u(x)=u(x_0)+ \Re \int_{x_0}^x 2\di u = u(x_0)+ \Re \int_{x_0}^x f\theta,
	\quad x\in M.
\end{equation}
Here, $\Re$ denotes the real part.
Conversely, a holomorphic map $f:M\to  \bA_*$ satisfying 
$\Re \oint_\gamma f\theta=0$ for every closed curve $\gamma\subset M$ 
(that is, $\Re(f\theta)$ is exact on $M$) determines a conformal minimal 
immersion $u:M\to\R^n$ by the above formula. 
The {\em generalised Gauss map}, or simply the {\em Gauss map}, 
of $u$ is the holomorphic map
\begin{equation}\label{eq:Q}
	\Gscr(u):M\to Q=\bigl\{ [z_1:\cdots:z_n]\in \CP^{n-1}: z_1^2+\cdots + z_n^2=0\big\}
\end{equation}
given by 
\begin{equation}\label{eq:GaussMap}
	\Gscr(u)(p)=[\di u_1(p):\cdots:\di u_n(p)], \quad p\in M.
\end{equation}

In this paper, we consider minimal surfaces with 
isolated singularieties. One type of singularities are branch points.
Let $u:M\to\R^n$ be a $\Cscr^1$ map with rank $2$ at some point. 
Denote by $\br(u)\subsetneq M$ the set of points $x\in M$ at which 
$u$ is not an immersion, i.e., $\rank \, du_x<2$. 
If the immersion $u:M\setminus \br(u)\to \R^n$ is 
conformal and harmonic, then $\di u$ is a continuous $(1,0)$-form on $M$ 
which is holomorphic on $M\setminus \br(u)$ and
satisfies $\{\di u=0\}=\br (u)$. 
By a theorem of Rad{\'o} \cite{Rado1924} 
(see also \cite[Theorem 3.4.17]{Stout2007}),
$\di u$ extends holomorphically to $M$, so $\br (u)$ 
is a closed discrete subset of $M$ and $u$ is harmonic on $M$.
The points of $\br (u)$ are called {\em branch points} of $u$, 
and $u$ is said to be a {\em branched conformal minimal surface};
see e.g.\ \cite[Ch.\ 6]{Osserman1986} or 
\cite[Remark 2.3.7]{AlarconForstnericLopez2021}.
Branch points of minimal surfaces are not removable by small deformations 
\cite[Remark 3.12.6]{AlarconForstnericLopez2021}. 
Our first result is that they are removable by isotopies. 
It is proved in Section \ref{sec:branched}; 
see the more precise statement in Theorem \ref{th:branched-2}.

%
% REMOVING BRANCH POINTS
%
\begin{theorem}\label{th:branched}
Given a branched conformal minimal surface $u:M\to\R^n$, 
there is an isotopy of branched conformal 
minimal surfaces $u_t:M\to\R^n$, $t\in [0,1]$, such that $u_0=u$ and 
$u_1$ is an immersion everywhere on $M$, that is, $\br(u_1)=\varnothing$. 
Furthermore, we can choose the isotopy such that for each $t\in [0,1]$ the 
Gauss map $\Gscr(u_t)$ \eqref{eq:GaussMap} of $u_t$ 
equals $\Gscr(u)$ in their common domain 
of definition $M\setminus(\br(u_t)\cup\br(u))$.
\end{theorem}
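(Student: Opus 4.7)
The plan is to leverage the Weierstrass representation \eqref{eq:W-formula} and reduce the problem to deforming a single scalar holomorphic function on $M$, with the Gauss map automatically held fixed. Fix a nowhere-vanishing holomorphic $1$-form $\theta$ on $M$ (which exists by the Gunning--Narasimhan theorem) and write $2\di u=f\theta$ with $f:M\to\bA$ holomorphic; then $\br(u)=\{f=0\}$ is a closed discrete set and the Gauss map $\Gscr(u)=[f]:M\to Q$ extends holomorphically across these isolated zeros. Since $M$ is an open Riemann surface its Picard group $H^1(M;\Ocal^*)$ vanishes, so the pullback to $M$ of the restriction $\Ocal_{\CP^{n-1}}(-1)|_Q$ is holomorphically trivial, and we may lift $\Gscr(u)$ to a nowhere-vanishing holomorphic map $g:M\to\bA_*$. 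This gives a factorisation $f=\phi g$ for a unique holomorphic function $\phi:M\to\C$ whose zero divisor coincides with $\br(u)$.

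In these coordinates, the theorem reduces to finding a path $\{\phi_t\}_{t\in[0,1]}$ in $\Ocal(M)$ with $\phi_0=\phi$, $\phi_1$ nowhere zero, and each $\phi_t$ satisfying the closedness condition $\Re\oint_\gamma\phi_t\,g\theta=0$ for every closed curve $\gamma\subset M$: then $u_t(x):=u(x_0)+\Re\int_{x_0}^x\phi_t\,g\theta$ is a well-defined branched conformal minimal surface with $u_0=u$, $\br(u_t)=\{\phi_t=0\}$, $\br(u_1)=\varnothing$, and Gauss map $[g]=\Gscr(u)$ on $M\setminus\br(u_t)$. Since the admissible functions form a real-linear subspace $\Lscr\subset\Ocal(M)$, it suffices to exhibit one nowhere-vanishing $\phi_1\in\Lscr$: the straight-line homotopy $\phi_t=(1-t)\phi+t\phi_1$ then lies in $\Lscr$, and none of its members is identically zero, since otherwise $\phi$ and $\phi_1$ would be proportional, contradicting the fact that $\phi$ has zeros while $\phi_1$ has none.

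The crux is therefore the construction of such a $\phi_1$, for which I would use the period-dominating spray technique central to the theory developed in \cite{AlarconForstnericLopez2021}. Note first that $\phi$ itself has vanishing periods against $g\theta$, since $\Re\oint_\gamma\phi g\theta=\Re\oint_\gamma f\theta=2\Re\oint_\gamma\di u=0$, so the target is to find a nowhere-vanishing $\phi_1\in\Ocal(M)$ with $\Re\oint_\gamma\phi_1\,g\theta=0$ for every cycle $\gamma$. Start from the candidate $\phi_1^{(0)}\equiv 1$, whose period profile $p_k:=\Re\oint_{\gamma_k}g\theta$ generally does not vanish. Then choose a period-dominating family of holomorphic functions $\xi_1,\ldots,\xi_N\in\Ocal(M)$, i.e., such that the real-linear map $(s_j)\mapsto\bigl(\Re\oint_{\gamma_k}\sum_j s_j\xi_j\,g\theta\bigr)_k$ is surjective, and so small in sup-norm that solving for $(s_j)$ cancelling $(p_k)$ and setting $\phi_1:=1+\sum_j s_j\xi_j$ yields a holomorphic function uniformly close to $1$, hence nowhere zero, with the correct period profile. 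Such a dominating family is produced by Runge--Mergelyan approximation of compactly supported closed $1$-forms representing a basis of $H^1(M;\R)$.

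The main obstacle is the infinite-topology case, in which $H_1(M;\Z)$ has infinite rank and the above correction must proceed inductively along an exhaustion $M=\bigcup_k M_k$ by Runge-admissible compact domains. At each stage one corrects the finitely many new periods introduced in $M_{k+1}\setminus M_k$ by a spray whose components are essentially supported on that region and whose sup-norm is chosen much smaller than the current lower bound on $|\phi_1^{(k)}|$ over $M_k$. Summability of the corrections, combined with Cauchy-type estimates, yields a limit $\phi_1\in\Ocal(M)$ that is globally holomorphic, nowhere zero, and satisfies the full period condition. This exhaustion-with-spray bookkeeping, familiar from Alarc\'on--Forstneri\v c theory, is the step I expect to be the most technically demanding.
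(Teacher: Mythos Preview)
Your overall strategy is sound and genuinely different from the paper's. Both approaches preserve the Gauss map by deforming through scalar multiples of $\partial u$, but the paper builds a \emph{parametric} family: Proposition~\ref{prop:abelian} pushes the zeros of $\partial u$ to infinity along analytic paths (giving $\omega_t=h_t\omega_0$), and then Proposition~\ref{pr:multipliers} supplies a continuous path of nowhere-vanishing multipliers $\xi_t$ correcting the real periods for every $t$. Your reduction is cleaner: since the admissible multipliers $\Lscr$ form a real-linear subspace, a single nowhere-vanishing $\phi_1\in\Lscr$ together with the straight segment $\phi_t=(1-t)\phi+t\phi_1$ suffices. This avoids all the parametric machinery, at the cost of less control over how the branch locus evolves during the isotopy (control the paper exploits in Corollary~\ref{co:branched-2}).

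There is, however, a genuine gap in your construction of $\phi_1$. The additive correction $\phi_1=1+\sum_j s_j\xi_j$ cannot be forced uniformly close to $1$: the target periods $p_k=\Re\oint_{\gamma_k}g\theta$ are \emph{fixed}, not small, and rescaling the spray functions $\xi_j\mapsto\epsilon\xi_j$ simply rescales the required coefficients $s_j\mapsto\epsilon^{-1}s_j$, leaving $\sum_j s_j\xi_j$ unchanged. On each curve $\gamma_k$ the correction is bounded below in sup-norm by roughly $|p_k|$ divided by the $L^1$-norm of $g\theta$ along $\gamma_k$, so proximity to $1$ fails and you cannot conclude $\phi_1\ne 0$. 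What you actually need is the main result of \cite{AlarconForstnericLopez2019JGEA} (equivalently, the non-parametric case of Proposition~\ref{pr:multipliers}): for every holomorphic $g:M\to\bA_*$ there exists $\phi_1:M\to\C^*$ with $\Re\oint_\gamma\phi_1 g\theta=0$ for all cycles $\gamma$. That is proved with a \emph{multiplicative} spray $\Xi(\zeta,\cdot)=\prod_k e^{\zeta_k\tilde g_k}$ (automatically $\C^*$-valued) combined with the inductive critical/noncritical exhaustion scheme you sketch for the infinite-topology case; that scheme is needed even when $H_1(M,\Z)$ has finite rank, precisely because the implicit function theorem only corrects periods \emph{near} the current one, while the target is far away. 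Once you invoke that result for the existence of $\phi_1$, your argument is complete.
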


By an {\em isotopy}, we mean a family of maps 
depending continuously on a parameter $t\in[0,1]$. The space 
\[
	\CMI_\br(M,\R^n)
\]
of branched conformal minimal surfaces $M\to\R^n$ is endowed 
with the compact-open topology and contains the subspace
$\CMI(M,\R^n)$ of conformal minimal immersions $M\to\R^n$.
Recall that $u\in \CMI_\br(M,\R^n)$ is said to be {\em nonflat}
if and only if $u(M)$ is not contained in an affine plane of $\R^n$;  
equivalently, the image of the map 
$f=2\di u/\theta:M\to\bA$ is not contained 
in a ray of $\bA$ \eqref{eq:nullq}. Also, $u$ is called {\em full} if and only if
$f(M)$ is not contained in a proper linear subspace of $\C^n$.
(See Definition 2.5.2 and Lemma 2.5.3 in
\cite[p.\ 106]{AlarconForstnericLopez2021}.) 
Note that the second assertion in Theorem \ref{th:branched} implies that if the given surface $u$ is nonflat (resp.\ full) then the isotopy $u_t$ $(t\in [0,1])$ can be chosen to consist of nonflat (resp.\ full) surfaces.

%
% ENDS OF FINITE TOTAL CURVATURE
%
Another important type of isolated singularities of minimal surfaces are
{\em complete ends of finite total curvature}. The Gaussian curvature
of a smooth immersed surface $u:M\to\R^n$ 
is a function $K:M\to \R$ whose value at $p\in M$
is the Gauss curvature of the Riemannian metric $u^*ds^2$ at $p$.
If $u\in \CMI(M,\R^n)$ is a minimal surface then $K$ assumes 
values in $\R_-=(-\infty,0]$, and the total curvature is the number
$\TC(u)=\int_M K\, dA\in [-\infty,0]$, where $dA$ is the area measure
determined by $u^*ds^2$. 
(See \cite[Sect.\ 2.6]{AlarconForstnericLopez2021}.) 
We say that $u$ is of {\em finite total curvature} if $\TC(u)>-\infty$.
A minimal surface $u:M\to\R^n$ is said to be {\em complete} if the 
metric $u^*ds^2$ induces a complete distance function on $M$.
If $M$ is a bordered Riemann surface with compact closure $\overline M$, 
$P \subset M$ is a compact subset, and 
$u: \overline M\setminus P\to\R^n$ is a complete conformal minimal surface 
of finite total curvature, then $P$ is a finite set 
by a theorem of Huber \cite{Huber1957} (see also \cite[Theorem 2.6.4]{AlarconForstnericLopez2021}), $\di u$ extends to a meromorphic 
1-form on $M$ with a pole of order $\ge 2$ at every point of $P$ by the 
Chern--Osserman theorem \cite{ChernOsserman1967JAM} 
(see also \cite[Theorem 4.1.1]{AlarconForstnericLopez2021}), 
$u$ is proper at every end $p\in P$, and 
its asymptotic behaviour at $p$ is described by the 
Jorge--Meeks theorem \cite{JorgeMeeks1983T} 
(see also \cite[Theorem 4.1.3]{AlarconForstnericLopez2021}).
Conversely, a nontrivial meromorphic $1$-form 
$\phi=(\phi_1,\ldots,\phi_n)$ on an open Riemann surface $M$ 
(such $\phi$ is called an {\em abelian differential}) 
has a closed discrete polar locus $P(\phi)\subset M$. If $\phi$
satisfies the nullity condition \eqref{eq:nullity} and has vanishing real 
periods on closed curves in $M'=M\setminus P(\phi)$, 
then it determines a conformal minimal 
surface $u:M'\to\R^n$ by $u(x)=\Re \int^x \phi$ with a complete 
end of finite total curvature at each point of $P(\phi)$. Let 
\[
	\CMI_\s(M,\R^n)
\] 
denote the space of conformal minimal immersions
$u:M\setminus P \to\R^n$, where $P=P(u)$ is a closed discrete subset 
of $M$ and $\di u$ is meromorphic on $M$ with an 
effective pole at every point $p\in P$ and no other zeros or poles. 
(The subscript s stands for singularities.)
With $\theta$ as above, we have $2\di u=f\theta$ where $f$ is a 
meromorphic map on $M$ with values in $\bA_*$ \eqref{eq:nullq} 
whose polar locus is $P$. (See Subsect.\ \ref{ss:abelian}.)
We can view $f$ as a holomorphic map to the complex submanifold 
\begin{equation}\label{eq:Y}
	Y =\bA_*\cup Q 
\end{equation}
of $\C^n \cup\CP^{n-1}=\CP^n$, where $Q$ is the hyperquadric in \eqref{eq:Q}.
The topology on $\CMI_\s(M,\R^n)$ is defined by the 
condition that a sequence $u_j \in \CMI_\s(M,\R^n)$ converges to 
$u\in \CMI_\s(M,\R^n)$ if and only if $f_j=2\di u_j/\theta:M\to Y$ 
converges to $f=2\di u/\theta:M\to Y$ in the compact-open topology 
and there is a point $x_0 \in M\setminus P(f)$ 
such that $u_j(x_0)$ converges to $u(x_0)$ in $\R^n$.
Note that $\CMI_\s(M,\R^n)$ contains the subspace $\CMI(M,\R^n)$
with its usual compact-open topology.
We denote by 
\[
	\CMI_\br^\f(M,\R^n)\subset \CMI_\br(M,\R^n),\ \   
	\CMI_\s^\f(M,\R^n)\subset \CMI_\s(M,\R^n), \ \ 
        \CMI^\f(M,\R^n)\subset \CMI(M,\R^n)
\]
%We denote by $\CMI_\br^\f(M,\R^n)\subset \CMI_\br(M,\R^n)$, $\CMI_\s^\f(M,\R^n)\subset \CMI_\s(M,\R^n)$, and $\CMI^\f(M,\R^n)\subset \CMI(M,\R^n)$ 
the corresponding subspaces of full maps.

The order of the pole of $f$ at $p\in P$ 
is the local intersection number of $f$ with $Q$ at $p$, which 
is positive, so a pole cannot be removed
by a small deformation of $u$.
Our second main result is that complete ends of finite total curvature of 
full minimal surfaces can be removed by an isotopy. 
It is proved in Sect.\ \ref{sec:FTC}. 

%
% REMOVING POLES
%
\begin{theorem}\label{th:sing}
Let $M$ be an open Riemann surface and $n\ge 3$ an integer. For any 
$u\in \CMI_\s^\f(M,\R^n)$ there is an isotopy $u_t\in \CMI_\s^\f(M,\R^n)$, 
$t\in [0,1]$, such that $u_0=u$ and $u_1$ is defined everywhere on $M$, that is, $u_1\in \CMI^\f(M,\R^n)$.
\end{theorem}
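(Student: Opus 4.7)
The plan is to derive Theorem~\ref{th:sing} from Theorem~\ref{th:branched} by first producing an isotopy in $\CMI_\s^\f(M,\R^n)$ from $u$ to a full \emph{branched} conformal minimal surface defined on all of $M$, and then concatenating with the isotopy provided by Theorem~\ref{th:branched}. Fix a nowhere-vanishing holomorphic $1$-form $\theta$ on $M$ and encode the family by meromorphic null $1$-forms $\phi_t = 2\di u_t = f_t\theta$ on $M$ with vanishing real periods on closed curves in $M\setminus P(\phi_t)$. The goal is to construct such an isotopy with $\phi_0=\phi:=2\di u$ and $\phi_1$ holomorphic on $M$ (it may vanish, giving branch points), keeping fullness and period-vanishing throughout.

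\textbf{Induction on the polar divisor.} Enumerate $P(u)=\{p_j\}$ with pole orders $k_j\ge 2$ (Chern--Osserman). It suffices, for each $p_j$ in turn, to construct an isotopy that reduces the pole order at $p_j$ by one while preserving the orders at the other $p_i$'s. Concatenating these one-step isotopies—countably many but locally finitely many on any compact subset, using a normal exhaustion of $M$—yields an isotopy in $\CMI_\s^\f(M,\R^n)$ terminating in $\CMI_\br^\f(M,\R^n)$.

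\textbf{Local pole reduction and global extension.} Fix $p=p_j$ and a chart $(U,z)$ around $p$ with $\theta|_U=dz$, so $\phi|_U = (c\, z^{-k}+\cdots)\,dz$ with leading coefficient $c\in \bA_*$. Choose a continuous null-preserving local family $\psi_t$ on $U$ with $\psi_0=\phi|_U$, leading coefficient deformed as $(1-t)c$, and $\psi_1$ of pole order at most $k-1$ at $p$; concretely, work in a rational parametrisation of $\bA_*$ (e.g.\ the birational chart obtained by projecting from a point of $Q$) to trade the quadratic nullity constraint for an open condition in a linear chart. Then extend the local family to a global meromorphic null isotopy $\phi_t$ on $M$, agreeing with $\phi$ outside a small neighbourhood of $p$, by the parametric Runge--Mergelyan theorem for holomorphic maps into $\bA_*$ (equivalently, $Y$-valued maps with prescribed intersection with $Q$) systematically developed in \cite{AlarconForstnericLopez2021}.

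\textbf{Period correction, fullness, and main obstacle.} The global approximation will in general alter the real periods; restore them by a parametric period-killing step, adding a small continuously varying null $1$-form supported on loops away from $P(\phi_t)$, using the standard surjectivity of the period map on the space of null $1$-forms (which holds in the presence of a full map to $\bA$). Fullness is an open condition, so it is preserved by sufficiently small corrections and can be reinstated by a generic small perturbation if needed. After all poles are exhausted, one lands in $\CMI_\br^\f(M,\R^n)$ and Theorem~\ref{th:branched} closes the argument. The technical heart, and the step I expect to be the main obstacle, is the parametric construction of the global null isotopy with exact period control: because nullity is a nonlinear constraint, ``adding a small null correction'' must be done in local rational charts of $\bA$ and then interpolated, and the period map must be made surjective \emph{continuously} in $t$ even as the effective pole order at $p_j$ drops at $t=1$ (so that the ambient homology of $M\setminus P(\phi_t)$ jumps). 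Once these parametric Oka-type tools for null curves are in place, the inductive concatenation and the appeal to Theorem~\ref{th:branched} are routine.
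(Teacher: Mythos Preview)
There is a genuine gap: your isotopy cannot stay inside $\CMI_\s^\f(M,\R^n)$, which is what the theorem demands. Recall that by definition $u_t\in\CMI_\s^\f(M,\R^n)$ means $2\di u_t=f_t\theta$ with $f_t$ taking values in $\bA_*\cup Q=Y$ and $f_t^{-1}(Q)=P(u_t)$, i.e.\ $f_t$ has \emph{no zeros}. Your ``local pole reduction'' step is topologically obstructed from staying in this space. If you modify $f_t$ only on a disc $U$ around $p$ (keeping $f_t|_{\partial U}$ close to $f_0|_{\partial U}$), the intersection number of $f_t(U)$ with the divisor $Q\subset Y$ is a homotopy invariant of $f_t|_{\partial U}$, equal to the initial pole order $k\ge 2$. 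Hence $f_1$ must still meet $Q$ over $U$ with total multiplicity $k$; you cannot make it $k-1$ without either creating a zero of $f_t$ in $U\setminus\{p\}$ (a branch point, forbidden in $\CMI_\s$) or moving intersection points across $\partial U$ (which destroys the ``local modification, global approximation'' scheme). Your own scalar ansatz illustrates this: with $h_t(z)=(1-t)+tz$, for $t\in(0,1)$ the multiplier has a zero at $z=-(1-t)/t\ne 0$, producing a branch point of $u_t$.

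At best, then, your construction yields an isotopy in $\CMI_{\br,\s}^\f(M,\R^n)$ ending in $\CMI_\br^\f(M,\R^n)$, after which Theorem~\ref{th:branched} gives an isotopy in $\CMI_\br(M,\R^n)$---still not in $\CMI_\s^\f(M,\R^n)$. So you would prove (a version of) Corollary~\ref{cor:main}, not Theorem~\ref{th:sing}; in the paper the logical dependence is the reverse. The paper's proof sidesteps the obstruction entirely by never dissolving poles: it \emph{moves} each pole along a path $p_j(t)$ out of successive compacts $K_i$ (transporting the local data by a flow), applies the Oka principle for maps into the Oka manifold $Y$ with the constraint $F^{-1}(Q)=\bigcup_j\Gamma_j$ (graphs of the moving poles) to globalise, and corrects periods with Lemma~\ref{lem:3.2} and Proposition~\ref{pr:multipliers}. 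This keeps $f_t$ in $\bA_*$ away from the moving poles at every stage, so $u_t\in\CMI_\s^\f(M,\R^n)$ throughout.
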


We wish to explain the reason for a somewhat different assumption on the 
initial minimal surface $u$ in Theorem \ref{th:branched} 
(where $u$ need not be full) and in Theorem \ref{th:sing},  
where $u$ is assumed to be full.
The proof of Theorem \ref{th:branched} amounts to finding a path of holomorphic 
abelian differentials $\omega_t$ $(t\in [0,1])$ on $M$ with values in $\bA$ 
\eqref{eq:nullq} and vanishing real periods such that $\omega_0=2\partial u$ 
and $\omega_1$ has no zeros. Integrating these abelian differentials by the
Weierstrass formula \eqref{eq:W-formula} gives an isotopy of conformal minimal 
surfaces $u_t(x)=\Re\int^x\omega_t$ satisfying the conclusion of 
Theorem \ref{th:branched}. The proof is accomplished in two steps. 
In the first step (see Proposition \ref{prop:abelian} (b))
we find a path of nontrivial abelian differentials 
$\varpi_t=h_t\omega_0$ $(t\in [0,1])$ with values in $\bA$, 
where $h_t$ is a path of meromorphic functions on $M$, satisfying 
$\varpi_0=\omega_0$ (that is, $h_0\equiv 1$) but without paying 
attention to the period conditions. This does not require fullness.
In the second step we find a path of nowhere vanishing holomorphic 
functions $\xi_t$ on $M$, with $\xi_0\equiv 1$, such that the path 
$\omega_t=\xi_t\varpi_t$ $(t\in [0,1])$ satisfies that each $\omega_t$ 
has vanishing real periods; see Proposition \ref{pr:multipliers}. 
This does not required fullness either; see Remark \ref{rem:nofull}. 
The second assertion in Theorem \ref{th:branched} is granted in this 
construction since each $\omega_t$ is of the form 
$\omega_t=2\xi_i h_t\partial u$; see \eqref{eq:GaussMap}. 
Only Runge approximation for functions into 
$\C^*=\C\setminus\{0\}$ is required for this task. 
The proof of Theorem \ref{th:sing} follows the same scheme but now 
the path of abelian differentials $\omega_t$ on $M$ is constructed 
to be nowhere vanishing and such that $\omega_1$ is holomorphic on $M$ 
(that is, it has no poles). In this case, the period problem is considerably 
more delicate than in the context of Theorem \ref{th:branched}, and in 
order to solve it we use Runge approximation for maps into the 
Oka manifold $Y$ in \eqref{eq:Y}. This forces us to ask that 
the given surface $u$ in Theorem \ref{th:sing} be full, and prevents us to 
preserve the Gauss map along the isotopy.

Going further, the proofs of Theorems \ref{th:branched} and \ref{th:sing}
show that the two results can be combined, that is, we can push 
both the branch points and the ends of finite total curvature 
out of the surface by an isotopy. Let 
\[
	\CMI_{\br,\s}(M,\R^n)\supset \CMI_{\br,\s}^\f(M,\R^n)
\] 
denote the set of conformal minimal immersions 
$u:M\setminus \Ecal_u\to\R^n$, where 
\begin{equation}\label{eq:Ecal}
	\Ecal_u=\Ecal_u^0\cup \Ecal_u^\infty = \br(u) \cup P(u) 
\end{equation}
is a (possibly empty) closed discrete subset of $M$ (depending on $u$) 
such that $\partial u$ is meromorphic on $M$ with the zero set 
$\Ecal_u^0=\br(u)$ (the branch locus of $u$) 
and the polar locus $\Ecal_u^\infty = P(u)$
(the set of complete ends of finite total curvature of $u$), and its subset of full maps. 
We call $\Ecal_u$ the {\em singular locus} of $u$.
The topology on $\CMI_{\br,\s}(M,\R^n)$
is determined in the same way as on its subspace $\CMI_{\s}(M,\R^n)$.

The following is our third main result.

%
% THE THIRD MAIN RESULT
%
\begin{corollary}\label{cor:main}
Let $M$ be an open Riemann surface and $n\ge 3$ an integer.
For every $u\in \CMI_{\br,\s}^\f(M,\R^n)$ there is an isotopy 
$u_t \in \CMI_{\br,\s}^\f(M,\R^n)$, $t\in [0,1]$, such that $u_0=u$ 
and $u_1$ is an immersion defined everywhere on $M$, 
that is, $u_1\in\CMI^\f(M,\R^n)$. 
\end{corollary}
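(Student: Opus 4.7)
The plan is to run the two-step scheme underlying the proofs of Theorems \ref{th:branched} and \ref{th:sing} on the meromorphic $\bA$-valued $1$-form $\omega_0:=2\di u$ on $M$, whose zero divisor equals $\Ecal_u^0=\br(u)$ and whose polar divisor equals $\Ecal_u^\infty=P(u)$ (see \eqref{eq:Ecal}). Recall that $u\in \CMI_{\br,\s}^\f(M,\R^n)$ is recovered from $\omega_0$ via the Weierstrass formula \eqref{eq:W-formula}, and conversely any meromorphic $\bA$-valued $1$-form on $M$ with vanishing real periods on the complement of its singular set defines such a map by integration.

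First, I would construct a continuous family $h_t$, $t\in[0,1]$, of meromorphic functions on $M$ with $h_0\equiv 1$ such that $\varpi_t:=h_t\omega_0$ is a nontrivial meromorphic $\bA$-valued $1$-form on $M$ for every $t$, and such that $\varpi_1$ is a nowhere vanishing holomorphic $1$-form on $M$. Concretely, the zero divisor of $h_1$ is prescribed to match the polar divisor of $\omega_0$ and the polar divisor of $h_1$ to match the zero divisor of $\omega_0$, with multiplicities; existence of such $h_1$ is granted by the Weierstrass theorem on the open Riemann surface $M$, and the path from $1$ to $h_1$ is produced exactly as in Proposition \ref{prop:abelian}(b), whose construction treats zeros and poles symmetrically. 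No fullness is used in this step.

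Next, I would apply the period-fixing scheme from the proof of Theorem \ref{th:sing}, i.e.\ the Oka-theoretic analogue of Proposition \ref{pr:multipliers}, to produce a continuous family $\xi_t$ of nowhere vanishing holomorphic functions on $M$ with $\xi_0\equiv 1$ such that the $\bA$-valued meromorphic $1$-forms $\omega_t:=\xi_t\varpi_t$ have vanishing real periods on every closed curve in $M\setminus \Ecal_{\omega_t}$, where $\Ecal_{\omega_t}$ is the union of the zero and polar sets of $\omega_t$. Setting $u_t(x):=u(x_0)+\Re\int_{x_0}^x\omega_t$ then yields an isotopy in $\CMI_{\br,\s}^\f(M,\R^n)$ from $u_0=u$ to $u_1\in\CMI^\f(M,\R^n)$, since $\omega_1$ is holomorphic and nowhere vanishing; fullness is preserved along the path because multiplication by scalar meromorphic functions leaves the linear span of the image of the Weierstrass data invariant.

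The main obstacle is the period adjustment in the second step. The intermediate $\omega_t$ generally carry both zeros and poles, and the presence of poles means that the period conditions cannot be handled by the elementary $\C^*$-valued Runge approximation that suffices in the branch-point-only setting of Theorem \ref{th:branched}; instead one must use Runge approximation for maps into the Oka manifold $Y$ in \eqref{eq:Y}, which is precisely the step that requires fullness of $u$ and that forces us to give up preserving the Gauss map along the isotopy. Once this Oka-theoretic machinery is in place, the zeros of $\varpi_t$ cause no additional difficulty, because cancelling a zero only shrinks the domain of integration of $\omega_t$ rather than altering the target of its Weierstrass data; this compatibility is what allows the two separate arguments to be run in parallel.
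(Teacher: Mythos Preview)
Your approach differs from the paper's and has a genuine gap at the residue conditions along the moving poles.

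The paper does not attempt to remove zeros and poles simultaneously. It simply concatenates two isotopies: first apply Theorem \ref{th:branched-2} to push out the branch points while \emph{keeping the polar divisor fixed} (there $\omega_t-\omega_0$ is holomorphic, so every residue of $\omega_t$ equals the corresponding residue of $\omega_0$, which has vanishing real part); then apply Theorem \ref{th:sing} to push out the poles. Fullness is preserved in the first stage because $\omega_t$ is a scalar multiple of $\omega_0$, so the second stage applies.

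In your scheme, the path $\varpi_t=h_t\omega_0$ from Proposition \ref{prop:abelian} (this is part (a), not (b); part (b) explicitly leaves the poles in place) has poles at the moving points $a_j(t)$, and the residue of $\varpi_t$ there is a scalar combination of the first few Taylor jets of $\omega_0/f_0^1$ at $a_j(t)$. There is no mechanism forcing $\Re\,\mathrm{Res}_{a_j(t)}\varpi_t=0$. Your second step proposes to repair all periods with a \emph{scalar} multiplier $\xi_t:M\to\C^*$, but a scalar jet at a pole of order $k$ only moves the residue inside an at most $k$-dimensional complex subspace of $\C^n$, so for $n\ge 3$ you cannot in general kill the real part of a vector residue this way. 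Consequently $\Re(\xi_t\varpi_t)$ will not be exact on $M\setminus P_t$, and the Weierstrass integral does not define an element of $\CMI_{\br,\s}(M,\R^n)$.

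What you call ``the Oka-theoretic analogue of Proposition \ref{pr:multipliers}'' in the proof of Theorem \ref{th:sing} is not a period-fixing device with scalar multipliers; the Oka step there produces a genuinely new $Y$-valued map $F(t,\cdot)$ that matches, to high order along the poles, a family obtained by \emph{pulling back} $\omega_0$ under local biholomorphic flows $\varphi_{j,t}$. Pullback by a biholomorphism preserves residues exactly, which is why the subsequent scalar period-correction (Proposition \ref{pr:multipliers} with interpolation to order $n_j$ at $p_j(t)$) succeeds. Your first step replaces this flow transport by scalar multiplication, and that is where residue control is lost. The clean fix is the paper's: first run Proposition \ref{prop:abelian}(b) to get into $\CMI_\s^\f(M,\R^n)$, then invoke Theorem \ref{th:sing}.
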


Every flat conformal minimal immersion is isotopic to a nonflat one
\cite[Theorem 5.7.6]{AlarconForstnericLopez2021}, hence the immersion 
$u_1$ in Theorem \ref{th:branched} can be chosen nonflat whenever 
one does not insist on the condition on the Gauss map. Furthermore, 
by a recent result of Vrhovnik \cite{Vrhovnik2025}, every nonflat conformal
minimal immersion $M\to\R^n$, $n\ge 3$, is isotopic to a proper one, 
which can be chosen an immersion with simple double points if $n=4$ 
and an embedding if $n\ge 5$. Therefore, the immersions $u_1$
in Theorems \ref{th:branched} and \ref{th:sing}, and in Corollary \ref{cor:main}, 
can be chosen of this kind. 

An immediate consequence of Theorems \ref{th:branched},
\ref{th:sing}, and Corollary \ref{cor:main} is that each of 
the inclusions in 
\begin{equation}\label{eq:inclusions}
\xymatrix{  
			    &  \CMI_\br^\f(M,\R^n) \ar@{^{(}->}[rd] & \\
	\CMI^\f(M,\R^n) \ar@{^{(}->}[ru] \ar@{^{(}->}[rd] 
			    & & \CMI_{\br,\s}^\f(M,\R^n) \\
			    &  \CMI_\s^\f(M,\R^n) \ar@{^{(}->}[ru] & \\
}
\end{equation}
induces a surjection of path components. The same holds for the inclusion
\begin{equation}\label{eq:inclusions-2}
	\CMI(M,\R^n)\longhookrightarrow \CMI_\br(M,\R^n)
\end{equation}
by Theorem \ref{th:branched}. Recall that for any open Riemann surface $M$ 
we have $H_1(M,\Z)=\Z^l$, $l\in \Z_+\cup\{\infty\}$.   
The set of path components of the space $\CMI^\f(M,\R^3)$ is in 
bijective correspondence with the $2^l$ elements of the group $(\Z_2)^l$
(see \cite[Corollary 1.6]{ForstnericLarusson2019CAG} or  
\cite[Corollary 3.12.4]{AlarconForstnericLopez2021}), and 
$\CMI^\f(M,\R^n)$ is path connected for $n\ge 4$ by 
\cite[Theorem 6.1]{AlarconLarusson2025}. For the space of
nonflat conformal minimal immersions, the same holds by 
\cite[Corollary 1.6]{ForstnericLarusson2019CAG}, and for 
$\CMI(M,\R^n)$ it holds by \cite[Corollary 5.7.7]{AlarconForstnericLopez2021}.
This implies the following corollary.

\begin{corollary}\label{cor:path-components}
Each of the spaces in \eqref{eq:inclusions} and \eqref{eq:inclusions-2} is path
connected for $n\ge 4$. If $H_1(M,\Z)\cong\Z^l$, 
there is a surjection from $(\Z_2)^l$ 
to the set of path components of each of the spaces
in \eqref{eq:inclusions} and \eqref{eq:inclusions-2} for $n=3$. 
\end{corollary}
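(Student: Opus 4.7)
The plan is to assemble the corollary from two ingredients: the surjectivity of the induced maps on path components that is furnished by Theorems~\ref{th:branched}, \ref{th:sing} and Corollary~\ref{cor:main}, and the already-known structure of $\pi_0$ for the spaces $\CMI^\f(M,\R^n)$ and $\CMI(M,\R^n)$ recalled in the paragraph immediately preceding the corollary. The reason the statement follows so quickly is that a composition of surjections is a surjection, and path-connectedness is the special case in which the target is forced to be a singleton.

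First I would verify the surjectivity on $\pi_0$ induced by each inclusion in \eqref{eq:inclusions}. Let $X$ denote any of $\CMI_\br^\f(M,\R^n)$, $\CMI_\s^\f(M,\R^n)$, or $\CMI_{\br,\s}^\f(M,\R^n)$. The pertinent theorem (respectively Theorem~\ref{th:branched}, Theorem~\ref{th:sing}, or Corollary~\ref{cor:main}) provides, for every $u\in X$, an isotopy $u_t\in X$, $t\in[0,1]$, with $u_0=u$ and $u_1\in\CMI^\f(M,\R^n)$. Hence the inclusion $\CMI^\f(M,\R^n)\hookrightarrow X$ induces a surjection $\pi_0(\CMI^\f(M,\R^n))\twoheadrightarrow \pi_0(X)$. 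The same argument applied to Theorem~\ref{th:branched} (without the fullness restriction) shows that the inclusion \eqref{eq:inclusions-2} induces a surjection $\pi_0(\CMI(M,\R^n))\twoheadrightarrow\pi_0(\CMI_\br(M,\R^n))$.

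For $n\ge 4$, I would invoke \cite[Theorem 6.1]{AlarconLarusson2025} for the path-connectedness of $\CMI^\f(M,\R^n)$ and \cite[Corollary 5.7.7]{AlarconForstnericLopez2021} for the path-connectedness of $\CMI(M,\R^n)$; any space that receives a surjection on $\pi_0$ from a singleton is itself a singleton, so each of the spaces in \eqref{eq:inclusions} and \eqref{eq:inclusions-2} is path connected. For $n=3$, I would use the bijection $\pi_0(\CMI^\f(M,\R^3))\cong (\Z_2)^l$ from \cite[Corollary 1.6]{ForstnericLarusson2019CAG} (equivalently \cite[Corollary 3.12.4]{AlarconForstnericLopez2021}) and the analogous surjection $(\Z_2)^l\twoheadrightarrow \pi_0(\CMI(M,\R^3))$ derived from \cite[Corollary 5.7.7]{AlarconForstnericLopez2021}. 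Composing these with the surjections from the previous paragraph yields the required surjection of $(\Z_2)^l$ onto the set of path components of each of the spaces in \eqref{eq:inclusions} and \eqref{eq:inclusions-2}.

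I do not foresee any real obstacle: all the substantive work has been absorbed into the preceding theorems and into the cited path-component computations, and the corollary reduces to the observation that surjections compose to surjections. The only point requiring a moment's care is to apply the correct version of Theorem~\ref{th:branched} for the inclusion \eqref{eq:inclusions-2}, where fullness is neither assumed nor asserted, as opposed to the inclusions in \eqref{eq:inclusions}, where the full versions of the three theorems must be used.
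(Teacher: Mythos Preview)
Your proposal is correct and follows essentially the same approach as the paper: the corollary is deduced immediately from the surjections on $\pi_0$ induced by the inclusions (via Theorems~\ref{th:branched}, \ref{th:sing}, and Corollary~\ref{cor:main}) together with the known path-component structure of $\CMI^\f(M,\R^n)$ and $\CMI(M,\R^n)$ cited in the paragraph preceding the corollary. The paper merely writes ``This implies the following corollary'' and you have spelled out exactly that deduction.
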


%
% PROBLEM
%
\begin{problem}
Do the inclusions in \eqref{eq:inclusions}  and \eqref{eq:inclusions-2} induce bijections of path components? Are they weak homotopy equivalences?
\end{problem}

The analogues of Theorems \ref{th:branched},
\ref{th:sing}, and Corollary \ref{cor:main} also hold,
with essentially the same proofs, 
for generalised null curves $F:M\to\C^n$, $n\ge 3$.
These are nonconstant meromorphic maps whose differential
$\di F=dF$ has isolated zeros and poles and assumes 
values in the null quadric $\bA$ \eqref{eq:nullq}. 
Equivalently, given a nowhere vanishing holomorphic $1$-form 
$\theta$ on $M$, we have $dF=f\theta$ where 
$f$ is a holomorphic map from $M$ to the complex subvariety 
$\bA\cup Q\subset\CP^n$ (see \eqref{eq:Q}). 
The real and the imaginary part of any generalised null
curve $M\to\C^n$ are elements of the space
$\CMI_{\br,\s}(M,\R^n)$.

Our method of proof of the main results allows not only to remove the 
singularities but also to move them freely within the surface. 
For example, in the context of Theorem \ref{th:branched}, 
given a closed discrete subset $C \subset M$ and a bijective 
map of $C$ to a subset of the branch locus $\br(u)$, there is an 
isotopy $u_t\in \CMI_\br(M,\R^n)$, $t\in [0,1]$, such that $u_0=u$ 
and $\br(u_1)=C$; see Corollary \ref{co:branched-2}. 
An analogous statement holds in the context
of Theorem \ref{th:sing}; see Corollary \ref{co:sing}.
This follows by a straightforward modification of our proofs.

%
%
% TOOLBOX
%
%
\section{The toolbox}\label{sec:tools}

%
%  PARAMETRIC WEIERSTRASS FOR FINITELY MANY POINTS
%  
\subsection{A Weierstrass interpolation theorem with parameters}

\noindent
In the proof of Theorem \ref{th:branched}, we shall need the following 
parametric version of Weierstrass interpolation for finitely many points in 
an open Riemann surface. This is a special case of 
\cite[Lemma 4.2]{AlarconForstnericLarusson2021GT}
but with added approximation on a compact Runge set.

\begin{lemma}\label{lem:Weierstrass}
Let $K$ be a compact Runge set in an open Riemann surface $M$ and
$a_j:[0,1]\to M\setminus K$, $j=1,\ldots,k$, real analytic maps 
such that the points $a_1(t),\ldots,a_k(t)$ are distinct for every $t\in [0,1]$. 
Also let $\Lambda\subset M$ be a closed discrete subset disjoint from 
$\bigcup_{j=1}^k a_j([0,1])$ and $\lambda:\Lambda\to\N$ a map.
Given integers $n_1,\ldots,n_k\in\Z$ and a number $\epsilon>0$,
there is a real analytic path of meromorphic functions $\{f_t\}_{t\in [0,1]}$ 
on $M$ such that for every $t\in [0,1]$ and $j=1,\ldots,k$, the function 
$f_t$ has degree $n_j$ at $a_j(t)$ and has no other zeros or poles, 
$\max_{x\in K, t\in [0,1]}|f_t(x)-1|<\epsilon$, and $f_t-1$ vanishes to order 
$\lambda(p)$ at $p$ for every $p\in \Lambda$.
 \end{lemma}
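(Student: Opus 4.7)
The plan is to factor the desired family multiplicatively as $f_t=\tilde f_t\cdot\psi_t$, where $\tilde f_t$ is produced by the cited (non-approximation) version of the statement and carries the prescribed zero/pole data along the curves $a_j$, while $\psi_t$ is a nowhere vanishing holomorphic family on $M$ used to bring $f_t$ close to $1$ on $K$ and to match the prescribed jets at $\Lambda$.

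First, I would apply \cite[Lemma 4.2]{AlarconForstnericLarusson2021GT} to produce a real analytic path of meromorphic functions $\tilde f_t$ on $M$ with a zero or pole of order $n_j$ at $a_j(t)$ for every $t\in[0,1]$ and $j\in\{1,\ldots,k\}$, and no other zeros or poles. Shrinking as needed, $\tilde f_t$ and $\tilde f_t^{-1}$ then form real analytic families of nowhere vanishing holomorphic functions on some open neighborhood $U\supset K\cup\Lambda$. The task reduces to finding a real analytic path of holomorphic maps $\psi_t:M\to\C^*$ with
\begin{enumerate}[(i)]
\item $\sup\bigl\{|\psi_t(x)-\tilde f_t(x)^{-1}|:(x,t)\in K\times[0,1]\bigr\}<\delta$, and
\item $\psi_t-\tilde f_t^{-1}$ vanishing to order $\lambda(p)$ at every $p\in\Lambda$,
\end{enumerate}
for a sufficiently small $\delta>0$. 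Setting $f_t:=\tilde f_t\psi_t$ then preserves the zero/pole structure of $\tilde f_t$ since $\psi_t$ is nowhere vanishing; the identity $f_t-1=\tilde f_t(\psi_t-\tilde f_t^{-1})$ yields $|f_t-1|\le\sup_{K}|\tilde f_t|\cdot\delta<\epsilon$ on $K\times[0,1]$ for $\delta$ small; and at each $p\in\Lambda$ the same identity together with the holomorphicity of $\tilde f_t$ at $p$ gives $f_t-1=O(z^{\lambda(p)})$ in any local coordinate $z$ vanishing at $p$.

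Existence of such a $\psi_t$ is a parametric Runge--Weierstrass approximation statement for holomorphic maps from $M$ into the Oka manifold $\C^*=\C\setminus\{0\}$, with jet interpolation on the closed discrete subset $\Lambda$. This is a classical consequence of Oka theory; real analytic dependence on $t\in[0,1]$ is handled in the usual way by complexifying $t$ to a thin open strip in $\C$ about $[0,1]$ and applying the Oka principle on the resulting two-dimensional Stein source (see the parametric Oka--Runge techniques collected in \cite[Ch.\ 5]{AlarconForstnericLopez2021}). The main obstacle is therefore largely bibliographic: locating a statement of the parametric Oka--Runge--Weierstrass theorem that simultaneously accommodates a real analytic parameter, approximation on a Runge compact, and jet interpolation on a discrete set. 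Once that is in hand, the geometric content of the lemma is the multiplicative factorisation $f_t=\tilde f_t\psi_t$ described above.
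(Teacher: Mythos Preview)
Your proposal is correct and follows essentially the same route as the paper: obtain a family $\tilde f_t$ with the prescribed divisor (the paper reduces to $k=n_1=1$ and reconstructs this via the second Cousin problem on $D\times M$, while you simply cite \cite[Lemma 4.2]{AlarconForstnericLarusson2021GT}), and then multiply by a nowhere vanishing holomorphic correction $\psi_t\approx\tilde f_t^{-1}$ obtained from the Oka principle for $\C^*$-valued maps with approximation on $[0,1]\times K$ and jet interpolation along $D\times\Lambda$. The only point you leave implicit that the paper spells out is the topological check that $\tilde f_t^{-1}|_{D\times K}$ extends continuously to a map $D\times M\to\C^*$, which follows since $K$ is Runge and $D$ is contractible.
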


\begin{proof}
It suffices to prove the result for $k=n_1=1$. This gives 
for each $j$ a path of holomorphic 
functions $\{f_{j,t}\}_{t\in [0,1]}$ on $M$ with a simple zero at 
$a_j(t)$ and no other zeros, satisfying the approximation 
condition on $K$ and the interpolation
conditions at points $p\in \Lambda$. The function
$f_t=\prod_{j=1}^k f_{j,t}^{n_j}$ then satisfies the theorem. 

The real analytic map $a=a_1:[0,1]\to M$ extends to a holomorphic map 
$a:D\to M$ from an open simply connected neighbourhood $D\subset \C$ 
of the interval $[0,1]\subset \R\subset \C$.
Its graph $\Sigma=\{(z,a(z)):z\in D\} \subset D\times M$ is a 
smooth closed complex hypersurface in the Stein surface $D\times M$. 
Shrinking $D$ around $[0,1]$ if necessary, we ensure that 
$\Sigma\cap (D\times (K\cup \Lambda))=\varnothing$.
Since $D$ is contractible, we have $H^2(D\times M,\Z)\cong H^2(M,\Z)=0$. 
Hence, Oka's solution of the second Cousin problem in \cite{Oka1939} 
implies that every divisor on $D\times M$ is a principal divisor.
Applying this to the divisor $\Sigma$ gives a holomorphic function
$f\in \Oscr(D\times M)$ that vanishes to order $1$ at every 
point of $\Sigma$ and has no other zeros. The function
$f_t=f(t,\cdotp)\in \Oscr(M)$ then has a simple zero at $a(t)$ and
no other zeros for every $t\in D$. 
Since $K$ is Runge in $M$ and $D$ is contractible,
the inclusion $D\times K \hra D\times M$ is homotopy equivalent to
the inclusion of a finite bouquet of circles representing $K$ in the 
finite or countable bouquet of circles representing $M$.
Hence, the map $1/f:D\times K\to \C^*$ extends 
to a continuous map $D\times M\to\C^*$. 
Since $[0,1]\times K$ is holomorphically convex in $D\times M$,
the Oka principle for maps to the complex homogeneous 
manifold $\C^*$ (see Grauert \cite{Grauert1957II} or 
\cite[Theorem 5.4.4]{Forstneric2017E}) gives a 
holomorphic function $g:D\times M\to \C^*$ approximating 
$1/f$ uniformly on $[0,1]\times K$ and such that $g-1/f$ 
vanishes to order $\lambda(p)$ on $D\times \{p\}$ for every $p\in \Lambda$.
(There are no topological obstructions for these
interpolation conditions since the sets $D\times \{p\}$ are contractible.)
Replacing $f$ by $fg$ gives a function satisfying the lemma
provided that the approximation of $1/f$ by $g$ was close enough 
on $[0,1]\times K$. 
\end{proof}

%
%  ABELIAN DIFFERENTIALS
%  
\subsection{Abelian differentials and complex cones}\label{ss:abelian}

Let $M$ be a Riemann surface. An abelian differential 
$\omega=(\omega_1,\ldots,\omega_n)$ on $M$ with values in $\C^n$ 
(whose components $\omega_i$ are meromorphic 1-forms on $M$) 
is said to be {\em nontrivial} if it is not identically zero, and 
is said to be {\em full} if its range is not contained in a proper
linear subspace of $\C^n$. A nontrivial abelian differential 
determines a divisor $(\omega)$ on $M$ defined as follows. 
Let $\zeta:U\to\C$ be a local holomorphic
coordinate around a point $p\in M$ with $\zeta(p)=0$. 
In this coordinate, $\omega=f(\zeta)d\zeta$ 
where $f=(f_1,\ldots,f_n)$ is a meromorphic map on $U$. 
Let $k(p)\in\Z$ be the unique integer such that 
$\zeta^{-k(p)}f(\zeta)$ is holomorphic near $\zeta=0$ 
and nonvanishing at $\zeta=0$
(i.e., $f_i(0)\ne 0$ for some $i\in \{1,\ldots,n\}$). 
Then, $(\omega)=\sum_{p\in M} k(p) p$. Its support 
$\supp(\omega)=\{p\in M:k(p)\ne 0\}$ 
is a closed discrete subset of $M$. Likewise, the divisors of zeros 
and poles of $\omega$ are, respectively,
\begin{equation}\label{eq:zero-polar}
	(\omega)_0=\sum_{p\in M,\, k(p)>0} k(p)p, \qquad
	(\omega)_\infty=\sum_{p\in M,\, k(p)<0} (-k(p))p,
\end{equation} 
hence $(\omega)=(\omega)_0-(\omega)_\infty$.
The support of $(\omega)_0$ and $(\omega)_\infty$
is the zero set and the polar set of $\omega$, respectively.

%
%  COMPLEX CONES
%
A complex cone in $\C^n$ is a closed analytic subvariety
$A\subset \C^n$ such that $\zeta A\subset A$ for every $\zeta\in \C$.
By a theorem of Chow \cite{Chow1949} (see also 
Chirka \cite[p.\ 74, Remark]{Chirka1989}), such $A$ is the common 
zero set of finitely many homogeneous polynomials on $\C^n$. 
An abelian differential $\omega=(\omega_1,\ldots,\omega_n)$ 
on $M$ is said to have values in $A$ if in any local holomorphic 
coordinate $\zeta$ on $M$ we have $\omega=f(\zeta)d\zeta$,
where $f$ is a meromorphic map with values in $A$.
Such $f$ can be seen as a holomorphic map in the projective
closure of $A$. 

%
%  ON MULTIPLIERS
%  
\subsection{A parametric interpolation theorem for multipliers 
with control of periods}

The following approximation result with interpolation for multiplier functions
is an extension of \cite[Theorem 4.1]{AlarconForstnericLopez2019JGEA}; 
see also \cite[Theorem 5.3.1]{AlarconForstnericLopez2021} and  
\cite[Theorem 2.1]{AlarconLarusson2024complete}.

%
% Multipliers interpolating on a finite set depending on the parameter
% 
\begin{proposition}\label{pr:multipliers}
Assume that $M$ is an open Riemann surface, $K\subset M$ is a 
compact smoothly bounded Runge domain, and 
$\{C_j: j\in I\subset \N\}$ is a collection of smoothly 
embedded oriented Jordan curves in $M$ determining 
a homology basis of $M$ such that 
\begin{itemize}
\item $\bigcup_{j\in J}C_j$ is a Runge compact set in $M$ for every 
finite set $J\subset I$, and 
\item each curve $C_j$ contains a nontrivial arc $\wt C_j$ disjoint 
from $C_i$ for all $i\in I\setminus\{j\}$.
\end{itemize} 
Set $C=\bigcup_{j\in I}C_j$ and $I^K=\{j\in I: C_j\subset K\}$. Let 
$\sigma_a:[0,1]\to K$ $(a=1,\ldots,\alpha\in\N)$ be a finite collection of 
analytic Jordan arcs with pairwise disjoint graphs in $[0,1]\times M$, 
$r\in \N$ an integer, $\Lambda\subset M$ a closed discrete subset, 
and $\lambda:\Lambda\to\N$ a map. 
Set $\Sigma=\bigcup_{a=1}^\alpha \sigma_a([0,1])\subset K$ 
and assume that 
$\Lambda\cap \Sigma=\varnothing=C\cap(\Lambda\cup \Sigma)$.
Let $n\in\N$, let $\theta_t$ $(t\in [0,1])$ be a continuous family of 
$\C^n$-valued full abelian differentials on $M$ with the polar set 
$P_t$ (see Subsect.\ \ref{ss:abelian}), 
set $P=\bigcup_{t\in[0,1]}P_t$, and assume that 
$C\cap P=\varnothing$.  Also let $\varphi_t\in\Oscr(K\cup\Lambda)$ 
$(t\in[0,1])$ be a continuous family of holomorphic functions with 
no zeros on a neighbourhood of $K\cup\Lambda$, and 
$\qgot_j:[0,1]\to\C^n$ $(j\in I)$ a collection of continuous maps 
such that
\[
	\int_{C_j}\varphi_t\theta_t=\qgot_j(t)\quad 
	\text{for every $j\in I^K$ and $t\in [0,1]$}.
\]
Then, the family $\varphi_t$ may be approximated uniformly on 
$[0,1]\times K$ by continuous families of holomorphic functions 
$\wt\varphi_t:M\to\C^*$ $(t\in [0,1])$ satisfying the following conditions:
\begin{enumerate}[\rm (a)]
\item $\int_{C_j}\wt\varphi_t\theta_t=\qgot_j(t)$ for every 
$j\in I$ and $t\in [0,1]$.
\item $\wt\varphi_t-\varphi_t$ vanishes to order $r$ at $\sigma_a(t)$ 
for every $a\in\{1,\ldots,\alpha\}$ and $t\in[0,1]$. 
\item $\wt\varphi_t-\varphi_t$ vanishes to order $\lambda(p)$ at $p$ 
for every $p\in \Lambda$ and $t\in [0,1]$.
\end{enumerate}
Furthermore, if $\varphi_0$ extends to a holomorphic function $M\to\C^*$ 
such that $\int_{C_j}\varphi_0\theta_0=\qgot_j(0)$ for all $j\in I$, then the 
homotopy $\wt \varphi_t$ can be chosen with $\wt\varphi_0=\varphi_0$.
\end{proposition}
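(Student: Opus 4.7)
The plan is to follow the iterative Runge-with-period-fixing scheme of \cite[Theorem 4.1]{AlarconForstnericLopez2019JGEA} (see also \cite[Theorem 5.3.1]{AlarconForstnericLopez2021}), carried out continuously in the parameter $t \in [0,1]$. I first fix a normal exhaustion $K = K_0 \subset K_1 \subset K_2 \subset \cdots$ of $M$ by smoothly bounded compact Runge domains with $M = \bigcup_m K_m$, arranged so that each $K_{m+1}$ is obtained from $K_m$ either by attaching a collar (no topological change) or by attaching a single one-handle that introduces exactly one new curve $C_j$ from the given homology basis; moreover, every arc $\sigma_a([0,1])$ and every point of $\Lambda$ lies in the interior of some $K_m$, and $C_j \cap K_m \in \{\varnothing, C_j\}$ for all $j$ and $m$. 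I then construct inductively continuous families $\varphi^{(m)}_t \in \Oscr(U_m, \C^*)$ on open neighbourhoods $U_m \supset K_m$, satisfying (a), (b), (c) for every curve, arc, and point captured by $K_m$ and approximating $\varphi^{(m-1)}_t$ on $K_{m-1}$. The desired $\wt\varphi_t$ is the diagonal limit; its range lies in $\C^*$ because I control the errors on a logarithmic scale so that $\sum_m \log(\varphi^{(m+1)}_t/\varphi^{(m)}_t)$ converges uniformly on compacta.

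The heart of each step is the parametric period-fixing. Using parametric Oka--Weil for $\exp \Oscr(\,\cdot\,)$ together with Lemma~\ref{lem:Weierstrass}, I first extend $\varphi^{(m)}_t$ to a continuous zero-free family on a neighbourhood of $K_{m+1}$ that satisfies the interpolation conditions at the points of $\Lambda$ and along the arcs $\sigma_a$ lying in $K_{m+1}$; this destroys, in general, the period conditions over the finitely many new curves $C_{j_1},\dots,C_{j_q}$ crossed by the added handle(s). To restore them I seek a multiplicative correction
\[
	\psi_t \;=\; 1 + \sum_{k=1}^{q} \sum_{\ell=1}^{2n} s_{k,\ell}(t)\, g_{k,\ell},
\]
where $g_{k,\ell} \in \Oscr(U_{m+1})$ are fixed functions concentrated, in the Runge sense, near the distinguished arc $\wt C_{j_k}$ and vanishing to high order at the relevant points of $\Lambda$ and along the arcs $\sigma_a$, while $s_{k,\ell}(t) \in \R$ are unknown real coefficients. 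Since $\wt C_{j_k}$ is disjoint from every other $C_i$, from $\Sigma$, and from $\Lambda$, the correction affects only the period over $C_{j_k}$ and preserves (b) and (c). Fullness of $\theta_t$ forces $\theta_t|_{\wt C_{j_k}}$ to span $\C^n$ (the condition of being contained in a proper complex subspace is analytic in $x$, so it cannot hold on a nontrivial arc if it fails on $M$), which allows the $g_{k,\ell}$ to be chosen so that the $\R$-linear differential at $\bz = 0$ of
\[
	\bz \,\longmapsto\, \int_{C_{j_k}} \psi_t\,\varphi^{(m+1)}_t\,\theta_t \;\in\; \C^n
\]
is an isomorphism $\R^{2n} \to \C^n$ uniformly in $t$. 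The parametric implicit function theorem then produces continuous $s_{k,\ell}(t)$ of the required small size, and replacing $\varphi^{(m+1)}_t$ by $\varphi^{(m+1)}_t \cdot \psi_t$ completes the inductive step.

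For the last assertion, suppose $\varphi_0$ already extends globally with the correct periods. I start the induction with $\varphi^{(m)}_0 := \varphi_0|_{U_m}$ for every $m$; then at $t = 0$ the period error entering each period-fixing step vanishes identically, so by uniqueness in the implicit function theorem the corrections $s_{k,\ell}(t)$ may be chosen to vanish at $t = 0$, and consequently $\wt\varphi_0 = \varphi_0$. The interpolation and approximation data are all compatible with the trivial correction at $t = 0$.

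The principal obstacle is the parametric period-fixing itself: one must realise, continuously in $t \in [0,1]$ and without disturbing the interpolation conditions at the moving points $\sigma_a(t)$ or along $\Lambda$, a simultaneous surjection onto the full finite-dimensional real period space for each new curve, while keeping the multiplicative correction close to $1$ on $K_{m+1}$ and localised away from the other curves. Fullness of $\theta_t$ together with the disjointness of the distinguished arcs $\wt C_j$ from the rest of $C$ is precisely what decouples the (possibly countably many) period equations into independent finite-dimensional problems, each solvable by the implicit function theorem uniformly in $t$, and what permits convergence of the iterative construction.
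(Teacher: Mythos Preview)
Your overall scheme---inductive exhaustion with parametric period-fixing via sprays localised near the arcs $\wt C_j$, followed by the implicit function theorem---is exactly the paper's, and the use of fullness to make the period derivative surjective is correct. However, there are two related gaps in the period-fixing step.

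First, at each inductive stage you correct only the periods over the \emph{new} curves $C_{j_k}$ introduced by the added handle. But the Runge-type extension of $\varphi_t^{(m)}$ to a neighbourhood of $K_{m+1}$ is only an approximation on $K_m$; it perturbs the periods over the curves already contained in $K_m$ as well. Since your correction $\psi_t$, being concentrated near the new arcs $\wt C_{j_k}$ (disjoint from the old curves), does not restore these, the errors over the old curves accumulate from step to step and the limit $\wt\varphi_t$ will in general fail condition (a). The paper avoids this by having its spray (Lemma~\ref{lem:3.2}, condition (c)) dominate \emph{all} periods over curves in $K_i$ simultaneously; condition (B$_i$) in the paper's induction is precisely that all of these periods are exact at every stage.

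Second, when a new curve $C_j$ enters at a critical step, the period $\int_{C_j}\phi_t\,\theta_t$ of the freshly extended function can be far from the target $\qgot_j(t)$; the implicit function theorem corrects only small defects, and your additive corrector $\psi_t = 1+\sum s_{k,\ell} g_{k,\ell}$ could then develop zeros. The standard remedy (implicit in the paper's reference to \cite[Lemma~2.3]{AlarconForstnericLopez2019JGEA}) is to first extend $\varphi_t^{(m)}$ \emph{continuously} over the attached arc so that the period over $C_j$ is exactly $\qgot_j(t)$, and only then approximate holomorphically; the residual defect is small and the implicit function theorem applies. Your proposal omits this preliminary extension. Relatedly, the paper uses the exponential spray $\Xi=\prod e^{\zeta_{j,k}\wt g_{j,k}}$, which is automatically zero-free for all parameter values, rather than an additive $1+\sum s\,g$.
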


The novelties with respect to 
\cite[Theorem 4.1]{AlarconForstnericLopez2019JGEA} are the interpolation
conditions (b) and (c), and the fact that the abelian differentials $\theta_t$ are 
allowed to have poles in the complement of $C$. 
This proposition will be used for various tasks in the proofs of our main results.
In particular, it will be applied in the proof of Theorem \ref{th:sing}
to preserve the residues when moving the poles. 
We shall explain the necessary modifications of 
\cite[proof of Theorem 4.1]{AlarconForstnericLopez2019JGEA} 
which ensure these extra conditions. The 
same arguments apply word by word in the more general framework when 
$K$ is a Runge admissible set 
(see \cite[Def.\ 3.1]{AlarconForstnericLopez2019JGEA}) and the  
multipliers $\varphi_t$ are of class $\Ascr(K)$; this generalisation is well 
understood and we shall not discuss it here. The key to the proof 
of Proposition \ref{pr:multipliers} is the following extension of 
\cite[Lemma 3.2]{AlarconForstnericLopez2019JGEA},
which will also play a crucial role in the proof of Theorem \ref{th:sing}.

\begin{lemma}\label{lem:3.2}
In Proposition \ref{pr:multipliers}, assume in addition 
that $I^K=\{1,\ldots,l\}$ for some $l\in\N$, write 
$C^K=\bigcup_{j=1}^l C_j$, and for each $t\in [0,1]$ let 
$\Pcal^t=(\Pcal^t_1,\ldots,\Pcal^t_l):\Cscr(C^K)\to (\C^n)^l$ denote 
the period map whose $j$-th component $(j=1,\ldots,l)$ is given by
\begin{equation}\label{eq:periodmap}
	\Pcal^t_j(g)=\int_{C_j}g\varphi_t\theta_t,\quad g\in \Cscr(C_j).
\end{equation}
Then there are a convex neighbourhood $D\subset \C$ of 
$[0,1]\subset\R\subset\C$ and a nowhere vanishing holomorphic function 
$\Xi:D\times\C^N\times M\to\C^*$ with $\Xi(\cdot,0,\cdot)\equiv 1$  
satisfying the following conditions:
\begin{enumerate}[\rm (a)]
\item $\Xi(t,\zeta,\cdot)-1$ vanishes to order $r$ at $\sigma_a(t)$ 
for every $a\in \{1,\ldots,\alpha\}$, $t\in [0,1]$, and $\zeta\in\C^N$.
\item $\Xi(t,\zeta,\cdot)-1$ vanishes to order $\lambda(p)$ at $p$ for 
every $p\in\Lambda$, $t\in [0,1]$, and $\zeta\in\C^N$.
\item For every $t\in[0,1]$ the map
\[
	\C^N\ni\zeta\longmapsto \Pcal^t\big(\Xi(t,\zeta,\cdot)\big)\in (\C^n)^l
\]
has maximal rank equal to $ln$ at $\zeta=0$.
\end{enumerate}
\end{lemma}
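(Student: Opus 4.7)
My plan is to build $\Xi$ in the exponential form
\[
	\Xi(t,\zeta,x) \;=\; \exp\!\left(\Psi(t,x)\sum_{k=1}^N \zeta_k\, g_k(t,x)\right),
\]
for a fixed \emph{interpolation factor} $\Psi\in\Oscr(D\times M)$ and suitable \emph{period directions} $g_1,\dots,g_N\in\Oscr(D\times M)$, where $D\subset\C$ is a convex neighbourhood of $[0,1]$ that I shrink as needed. The exponential ansatz automatically makes $\Xi$ nowhere vanishing on $D\times\C^N\times M$ and normalises $\Xi(\cdot,0,\cdot)\equiv 1$; if in addition $\Psi(t,\cdot)$ vanishes to order $r$ at each $\sigma_a(t)$ and to order $\lambda(p)$ at each $p\in\Lambda$, then so does the exponent, and hence so does $\Xi(t,\zeta,\cdot)-1$, yielding (a) and (b). Condition (c) will be arranged through the choice of $g_1,\dots,g_N$.

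Since each $\sigma_a$ is real analytic, after shrinking $D$ the $\sigma_a$ extend holomorphically to $D$ with pairwise disjoint graphs $\Gamma_a=\{(z,\sigma_a(z)):z\in D\}\subset D\times M$, all disjoint from $D\times(\Lambda\cup C^K\cup P)$; here I use $\Sigma\cap(\Lambda\cup C)=\varnothing$ and $C\cap P=\varnothing$. On the Stein surface $D\times M$ I consider the effective divisor
\[
	\Dcal \;:=\; r\sum_{a=1}^\alpha \Gamma_a \;+\; \sum_{p\in\Lambda}\lambda(p)\,(D\times\{p\}).
\]
Because $D$ is contractible, $H^2(D\times M,\Z)\cong H^2(M,\Z)=0$, so Oka's solution of the second Cousin problem \cite{Oka1939} yields $\Psi\in\Oscr(D\times M)$ realising $\Dcal$ as its zero divisor with no other zeros. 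In particular $\Psi$ has no zeros on $D\times C^K$, and restricting to $\{t\}\times M$ gives the desired pointwise vanishing orders of $\Psi(t,\cdot)$.

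The main obstacle is choosing $g_1,\dots,g_N$ so that (c) holds uniformly in $t\in[0,1]$. Differentiating $\Xi$ at $\zeta=0$,
\[
	\frac{\partial}{\partial\zeta_k}\bigg|_{\zeta=0}\Pcal^t(\Xi(t,\zeta,\cdot)) \;=\; \Bigl(\int_{C_j} g_k(t,\cdot)\,\Psi(t,\cdot)\,\varphi_t\,\theta_t\Bigr)_{j=1}^l \;=:\; \bv_k(t)\in(\C^n)^l,
\]
so (c) is equivalent to $\bv_1(t),\dots,\bv_N(t)$ spanning $(\C^n)^l$ for all $t\in[0,1]$. Fix $t_0\in[0,1]$ and set $\wt\theta_{t_0}:=\Psi(t_0,\cdot)\varphi_{t_0}\theta_{t_0}$, a nowhere-zero $\C^n$-valued abelian differential on a neighbourhood of $C^K$ whose components are $\C$-linearly independent by fullness of $\theta_{t_0}$. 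Each $C_j$ contains a subarc $\wt C_j$ disjoint from the other $C_i$, so, mimicking \cite[Lemma 3.2]{AlarconForstnericLopez2019JGEA}, for every $(j,s)\in\{1,\dots,l\}\times\{1,\dots,n\}$ I construct a function $h_{j,s}^{(t_0)}$ holomorphic on a Runge neighbourhood of $C^K$ and essentially supported close to $\wt C_j$, whose period vector $\bigl(\int_{C_i}h_{j,s}^{(t_0)}\wt\theta_{t_0}\bigr)_i$ is arbitrarily close to the standard basis vector $e_{j,s}\in(\C^n)^l$; Runge approximation extends it to $\Oscr(M)$. Taking $N=ln$ of these gives full rank at $t_0$. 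Since full rank is an open condition in $t$, compactness of $[0,1]$ provides a finite cover $\{U_i\}_{i=1}^m$ on each of which a single family $\{h_k^{(i)}\}_{k=1}^N\subset\Oscr(M)$ works, and I merge them into $g_1,\dots,g_N\in\Oscr(D\times M)$ by setting $g_k(t,x)=\sum_i \phi_i(t) h_k^{(i)}(x)$, where $\phi_i\in\Oscr(D)$ are Runge approximations on $[0,1]$ of a smooth partition of unity subordinate to $\{U_i\}$; a perturbation argument then yields full rank for every $t\in[0,1]$. The poles $P_t$ of $\theta_t$ cause no difficulty, since $C\cap P=\varnothing$ keeps every integrand holomorphic near $C^K$.
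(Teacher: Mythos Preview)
Your overall scheme is sound and genuinely different from the paper's in two respects, but the gluing step in $t$ has a gap.

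\textbf{Differences from the paper.} The paper writes $\Xi=\prod_{j,k} e^{\zeta_{j,k}\wt g_{j,k}}$ where each $\wt g_{j,k}\in\Oscr(D\times M)$ is obtained by (i) first producing, via \cite[Lemma 2.1 and proof of Lemma 3.2]{AlarconForstnericLopez2019JGEA}, continuous functions $g_{j,k}$ on $C_j$ supported on $\wt C_j$ that are \emph{independent of $t$} and already dominate the periods for every $t\in[0,1]$, and then (ii) approximating these on $[0,1]\times C^K$ by holomorphic functions on $D\times M$ via Mergelyan and Oka--Weil \emph{with jet interpolation} along $\Delta\cup(D\times\Lambda)$. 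You instead (i$'$) factor out a Cousin~II interpolation multiplier $\Psi$, freeing the $g_k$ from any jet constraints, and (ii$'$) build the $g_k$ by gluing in $t$ families that work only locally. Your $\Psi$-factor is a clean alternative route to (a)--(b); the paper packages (a)--(b) and (c) into a single approximation step.

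\textbf{The gap.} Writing $g_k=\sum_i \phi_i\, h_k^{(i)}$ with $N=ln$ fixed does \emph{not} generally preserve the rank condition: if $\{\bv_k^{(i)}(t)\}_k$ is merely a basis of $(\C^n)^l$ for each $i$ with $t\in U_i$, a convex combination $\sum_i \phi_i(t)\,\bv_k^{(i)}(t)$ can be degenerate (e.g.\ if $\bv_k^{(1)}(t)=e_k$ and $\bv_k^{(2)}(t)=e_{\sigma(k)}$ for a transposition $\sigma$, the average drops rank). So ``a perturbation argument'' does not suffice as stated. Two easy repairs: either \emph{concatenate} rather than sum, setting $N=m\cdot ln$ and $g_{(i-1)ln+k}(t,x)=\phi_i(t)\,h_k^{(i)}(x)$ (at each $t$ some $\phi_{i_0}(t)$ is bounded away from zero with $t\in U_{i_0}$, and that block alone already spans); or strengthen your local step so that $\bv_k^{(i)}(t)$ remains $\epsilon$-close to $e_k$ for \emph{all} $t\in U_i$, not just at the centre $t_i$, in which case your combination is $\epsilon$-close to $e_k$ and hence full rank. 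The paper in effect takes the first route (hence the unspecified $N_j\ge n$), but at the level of continuous functions on $C_j$, which sidesteps the issue entirely and avoids the partition of unity.
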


\begin{proof}
Up to enlarging $K$ slightly, we may assume that 
$\Sigma:=\bigcup_{a=1}^\alpha \sigma_a([0,1])\subset \mathring K$. 
Using \cite[Lemma 2.1]{AlarconForstnericLopez2019JGEA} as in 
\cite[proof of Lemma 3.2]{AlarconForstnericLopez2019JGEA} 
(see also \cite[proof of Proposition 3.1]{AlarconLopez2025AMPA}) we obtain 
for each $j\in\{1,\ldots,l\}$ an integer $N_j\ge n$ and continuous
functions $g_{j,k}:C_j\to\C$ $(k=1,\ldots,N_j)$ with the support on 
the arc $\wt C_j$ such that the function 
$h_j:\C^{N_j}\times C_j\to\C^*$ given by
\[
	h_j(\zeta_j,p)=\prod_{k=1}^{N_j} e^{\zeta_{j,k}g_{j,k}(p)},\quad 
	\zeta_j=(\zeta_{j,1},\ldots,\zeta_{j,N_j})\in\C^{N_j},\quad p\in C_j
\]
satisfies the following period domination condition:
\begin{equation}\label{eq:3.2}
	\frac{\partial}{\partial \zeta_j}\Pcal_j^t\big(h_j(\zeta_j,\cdot)\big) 
	\big|_{\zeta_j=0}:T_0\C^{N_j} \longrightarrow\C^n
	\ \ \text{is surjective for every $t\in [0,1]$}.
\end{equation}
Recall that $\Sigma\cap(C\cup\Lambda)=\varnothing$. Choose a small 
smoothly bounded convex neighbourhood $D\subset\C$ of $[0,1]$ 
such that every analytic arc $\sigma_a:[0,1]\to \mathring K$ 
$(a\in\{1,\ldots,\alpha\})$ extends to a holomorphic map 
$\sigma_a:D\to \mathring K$, and set 
$\Sigma'=\bigcup_{a=1}^\alpha\sigma_a(D)$. 
Let $\delta_a(t)=(t,\sigma_a(t)) \in D\times M$ for $t\in D$.
By choosing the domain $D\supset [0,1]$ small enough,  
we have that $\overline{\Sigma'}\subset\mathring K\setminus (C\cup\Lambda)$ 
and $\{\delta_a(D) : a=1,\ldots,\alpha\}$ is a family of pairwise
disjoint closed complex curves in the Stein surface $D\times M$.
Set $\Delta=\bigcup_{a=1}^\alpha\delta_a(D)\subset D\times \Sigma'$. 
We extend each function $g_{j,k}:C_j\to\C$ $(j\in\{1,\ldots l\}$, 
$k\in\{1,\ldots,N_j\})$ by $0$ to 
$\Sigma'\cup\Lambda\cup(C^K\setminus C_j)$
and view it as a continuous map 
$g_{j,k}:D\times (\Sigma'\cup\Lambda\cup C^K)\to\C$ given by 
$g_{j,k}(t,\cdot)=g_{j,k}$ for all $t\in D$.  
Note that $g_{j,k}$ vanishes on $\Delta\cup(D\times\Lambda)$. 
It is clear that $g_{j,k}$ extends to a continuous function on $D\times M$ 
that vanishes on a neighbourhood of 
the divisor $\Delta\cup (D\times\Lambda)$. 
Since the compact set $[0,1]\times C^K\subset D\times M$
is holomorphically convex and $C^K$ is a union of curves, 
Mergelyan's theorem shows that we can 
approximate $g_{j,k}$ uniformly on $[0,1]\times C^K$ 
by a holomorphic function on a neighbourhood of $[0,1]\times C^K$ 
in $D\times M$, which we still denote $g_{j,k}$.
Next, a standard recursive application of the Oka--Weil theorem 
with jet interpolation enables us to approximate $g_{j,k}$ uniformly on 
$[0,1]\times C^K$ by a holomorphic function $\wt g_{j,k}\in \Oscr(D\times M)$
vanishing to any given order on each connected component of 
$\Delta\cup (D\times\Lambda)$.
(These components are $\delta_a(D)$ $(a=1,\ldots,\alpha)$
and $D\times \{p\}$ for $p\in \Lambda$.) 
In particular, $\wt g_{j,k}$ can be chosen such that 
$\wt g_{j,k}(t,\cdot)$, $t\in [0,1]$, vanishes to order $r$ at the point 
$\sigma_a(t)$ for all $a\in\{1,\ldots,\alpha\}$, 
and it vanishes to order $\lambda (p)$ 
at every point $p\in \Lambda$. Set
\[
	\Xi(t,\zeta,p)=\prod_{j=1}^l\prod_{k=1}^{N_j}e^{\zeta_{j,k}\wt g_{j,k}(t,p)}, 
	\quad t\in D,\; \zeta=(\zeta_1,\ldots,\zeta_l)\in 
	\C^{N_1}\times\cdots\times\C^{N_l},\; p\in M. 
\]
Setting $N=\sum_{j=1}^l N_j\ge nl$ and identifying 
$\C^N=\C^{N_1}\times\cdots\times\C^{N_l}$, it is clear that 
$\Xi: D\times\C^N\times M\to\C^*$ is holomorphic and satisfies 
$\Xi(\cdot,0,\cdot)\equiv 1$ and conditions (a) and (b) in the lemma. 
Moreover, (c) is guaranteed by \eqref{eq:3.2} whenever the
approximation of each $g_{j,k}$ by $\wt g_{j,k}$ on $[0,1]\times C^K$ 
is close enough.
\end{proof}

%
% Proof of the proposition on multipliers
%
\begin{proof}[Proof of Proposition \ref{pr:multipliers}]
Choose a normal exhaustion
\[
	K=K_0\subset K_1\subset K_2\subset \cdots\subset 
	\bigcup_{i=0}^\infty K_i=M
\]
by smoothly bounded Runge compact domains such that, 
setting $I^i=\{j\in I: C_j\subset K_i\}$ for $i=0,1,2,\ldots$ 
(note that $I^i$ is finite and $I^i\subset I^{i+1}$ for every $i\ge 0$), 
the following conditions hold for every $i\in \N$:
\begin{itemize}
\item $I^i\setminus I^{i-1}$ is either empty or a singleton.
\item The compact set $K_{i-1}\cup\bigcup_{j\in I^i}C_j$ is Runge 
in $M$ and admissible in the sense of  
\cite[Def.\ 3.1]{AlarconForstnericLopez2019JGEA}.
\end{itemize}
In order to ensure the latter condition for $i=1$ we might need to 
replace $K=K_0$ by a slightly larger compact domain.
Set $\varphi_t^0=\varphi_t:K_0\cup\Lambda\to\C^*$ for $t\in [0,1]$. 
The proof consists of constructing a sequence of continuous families 
$\{\varphi_t^i\in\Oscr(K_i\cup\Lambda)\}_{i\in\N}$ $(t\in [0,1])$ of 
holomorphic functions without zeros on a neighbourhood 
of $K_i\cup\Lambda$ such that the following conditions hold for 
all $t\in[0,1]$ and $i\in\N$:
\begin{enumerate}[\rm (A{$_i$})] 
\item $\varphi_t^i$ is as close as desired to $\varphi_t^{i-1}$ 
uniformly on $[0,1]\times K_{i-1}$.
\item $\int_{C_j} \varphi_t^i\theta_t=\qgot_j(t)$ holds for every $j\in I^i$.
\item $\varphi_t^i-\varphi_t$ vanishes to order $r$ at the point 
$\sigma_a(t)$ for every $a\in\{1,\ldots,\alpha\}$ and $t\in[0,1]$. 
\item $\varphi_t^i-\varphi_t$ vanishes to order $\lambda(p)$ at  
every point $p\in \Lambda$.
\item If $\varphi_0$ extends to a holomorphic function $M\to\C^*$ such that 
$\int_{C_j}\varphi_0\theta_0=\qgot_j(0)$ for all $j\in I$, then the homotopy 
$\varphi_t^i$ can be chosen with $\varphi_0^i=\varphi_0$.
\end{enumerate}
As in the proof of \cite[Theorem 4.1]{AlarconForstnericLopez2019JGEA}, if the approximation in (A$_i$) is close enough for every $i\in\N$, we obtain a limit continuous family of holomorphic functions
$\wt\varphi_t=\lim_{i\to\infty}\varphi_t^i:M\to\C^*$, $t\in [0,1]$, 
satisfying Proposition \ref{pr:multipliers}. 
Conditions (b) and (c) are trivially guaranteed by (C$_i$) and (D$_i$). 

We proceed by induction. The base is given by the family $\varphi_t^0$ $(t\in [0,1])$. For the inductive step, we assume that we have a suitable family $\varphi_t^{i-1}$ for some $i\in \N$ and will provide $\varphi_t^i$. 
We distinguish cases.

{\em The noncritical case: $I^i= I^{i-1}$.} Assume that 
$I^i=\{1,\ldots,l\in\N\}\neq\varnothing$, for the proof is much simpler otherwise.
Set $C^i=\bigcup_{j=1}^lC_j$ and for each $t\in [0,1]$ consider the period map 
$\Pcal^t:\Cscr(C^i)\to(\C^n)^l$ defined by \eqref{eq:periodmap} 
with $\varphi_t$ replaced by $\varphi_t^{i-1}$. By Lemma \ref{lem:3.2}, 
there are a convex neighbourhood $D\subset \C$ of $[0,1]\subset\R\subset\C$ 
and a nowhere vanishing holomorphic function 
$\Xi:D\times\C^N\times M\to\C^*$ such that $\Xi(\cdot,0,\cdot)\equiv 1$ 
and the following conditions hold for every $t\in [0,1]$. 
\begin{enumerate}[\rm (I)]
\item $\Xi(t,\zeta,\cdot)-1$ vanishes to order $r$ at $\sigma_a(t)$ for every 
$a\in \{1,\ldots,\alpha\}$ and $\zeta\in\C^N$.
\item $\Xi(t,\zeta,\cdot)-1$ vanishes to order $\lambda(p)$ at $p$ for 
every $p\in\Lambda$ and $\zeta\in\C^N$.
\item The map 
$\C^N\ni\zeta\longmapsto \Pcal^t\big(\Xi(t,\zeta,\cdot)\big)\in (\C^n)^l$ 
has maximal rank equal to $ln$ at $\zeta=0$.
\end{enumerate}
Taking into account conditions (B$_{i-1}$)--(E$_{i-1}$) and using a similar 
argument as in the proof of Lemma \ref{lem:3.2} to ensure parametric 
interpolation, we find a continuous family of holomorphic functions
$\phi_t:K_i\cup\Lambda\to\C^*$, $t\in [0,1]$, on a neighbourhood 
of $K_i\cup\Lambda$  
satisfying the following conditions: 
\begin{enumerate}[\rm (i)]
\item $\phi_t$ is as close as desired to $\varphi_t^{i-1}$ uniformly 
on $[0,1]\times K_{i-1}$.
\item $\phi_t-\varphi_t$ vanishes to order $r$ at $\sigma_a(t)$ for every
$a\in\{1,\ldots,\alpha\}$ and $t\in[0,1]$.
\item $\phi_t-\varphi_t$ vanishes to order $\lambda(p)$ at $p$ for 
every $p\in \Lambda$ and $t\in [0,1]$.
\item If $\varphi_0$ extends to a holomorphic function $M\to\C^*$ 
such that $\int_{C_j}\varphi_0\theta_0=\qgot_j(0)$ for all $j\in I$, 
then the homotopy $\phi_t$ can be chosen such that $\phi_0=\varphi_0$.
\end{enumerate}
In view of  condition (III), if the approximation in (i) is close enough then, 
arguing as in \cite[proof of Lemma 4.2]{AlarconForstnericLopez2019JGEA}, 
the implicit function theorem furnishes a continuous path $\beta:[0,1]\to\C^N$ 
such that $\Xi(t,\beta(t),\cdot)$ is close to $1$ uniformly on $K_{i-1}$
for all $t\in [0,1]$ and the continuous family of holomorphic functions
\[
	\varphi_t^i:=\Xi(t,\beta(t),\cdot)\phi_t: K_i\cup\Lambda \to\C^*,
	\quad t\in [0,1]
\] 
satisfies conditions (A$_i$)--(E$_i$);
in particular, in the assumptions in (iv) we can choose $\beta$ with 
$\beta(0)=0$. Note that (C$_i$) is ensured by (C$_{i-1}$), (I), and (ii), 
while (D$_i$) is guaranteed by (D$_{i-1}$), (II), and (iii).

{\em The critical case: $I^i\neq I^{i-1}$.} 
In this case $I^i\setminus I^{i-1}=\{j\}\subset I$.
Taking into account that $C_j\cap(\Lambda\cup \Sigma\cup P)=\varnothing$ 
and $K_{i-1}\cup C_j$ is an admissible Runge compact set in $M$, the 
construction is reduced to the noncritical case by using Lemma \ref{lem:3.2} 
and \cite[Lemma 2.3]{AlarconForstnericLopez2019JGEA}. 
The details are similar to
\cite[proof of Lemma 4.3]{AlarconForstnericLopez2019JGEA}
and we leave them out.
This completes the proof of Proposition \ref{pr:multipliers}.
\end{proof}

%
%
%   PROOF 1
%
\section{Removing branch points}\label{sec:branched}

\noindent
In this section we establish the following extension
of Theorem \ref{th:branched} which says that we can move
the branch points out of a minimal surface while keeping the poles fixed. 
This result and Theorem \ref{th:sing} (on moving the poles)
trivially imply Corollary \ref{cor:main}. We shall use the notation 
in \eqref{eq:GaussMap} and \eqref{eq:Ecal}.

%
% 
% REMOVING BRANCH POINTS 2
%
\begin{theorem}\label{th:branched-2}
Let $M$ be an open Riemann surface and $n\ge 3$ an integer.
For every $u\in \CMI_{\br,\s}(M,\R^n)$ there is an isotopy 
$u_t \in \CMI_{\br,\s}(M,\R^n)$, $t\in [0,1]$, such that $u_0=u$, 
$u_t-u$ is continuous on $M$ (hence $\Ecal_{u_t}^\infty=\Ecal_u^\infty$) 
for all $t\in [0,1]$, and $u_1:M\setminus \Ecal_{u_1}^\infty\to\R^n$ is 
unbranched, hence $u_1\in\CMI_\s(M,\R^n)$. Furthermore, we can 
choose the isotopy such that for each $t\in [0,1]$ the Gauss map 
$\Gscr(u_t)$ of $u_t$ equals $\Gscr(u)$ in their common domain of definition 
$M\setminus(\br(u_t)\cup\br(u)\cup \Ecal_u^\infty)$.
\end{theorem}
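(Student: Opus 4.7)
The plan is to realize the isotopy in the Weierstrass form
\[
 u_t(x) = u(x_0) + \Re \int_{x_0}^x \omega_t, \qquad t \in [0,1],
\]
where $\omega_t = \xi_t h_t \cdot 2\partial u$ is a continuous path of $\bA$-valued meromorphic abelian differentials on $M$, obtained by multiplying $\omega_0 := 2\partial u$ first by a path of meromorphic scalar functions $h_t$ on $M$ with $h_0 \equiv 1$ (to be used to cancel the branch divisor of $\omega_0$) and then by a path of nowhere-vanishing holomorphic functions $\xi_t\colon M \to \C^*$ with $\xi_0 \equiv 1$ (to be used to correct the real periods). Scalar multiplication preserves the nullity condition because $\bA$ is a complex cone, and since $\xi_t h_t$ is a nonzero scalar outside a discrete set, the projective class $[\partial u_1:\cdots:\partial u_n]$ is preserved; this yields the Gauss-map identity $\Gscr(u_t)=\Gscr(u)$ on $M\setminus(\br(u_t)\cup\br(u)\cup\Ecal_u^\infty)$ automatically.

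The first step is to build $h_t$ so that it removes the branching while keeping the poles and principal parts of $\omega_0$ fixed. Write $\br(u)=\{a_j\}$, let $k_j\in\N$ be the order of vanishing of $\omega_0$ at $a_j$, set $P:=\Ecal_u^\infty$, and let $m_p$ be the order of the pole of $\omega_0$ at $p\in P$. Since every divisor on an open Riemann surface is principal (Florack/Behnke--Stein), there exists a meromorphic function $h_1$ on $M$ with divisor exactly $-\sum_j k_j[a_j]$; then $\varpi_1:=h_1\omega_0$ is $\bA$-valued, has no zeros on $M$, and (after a harmless normalisation making $h_1-1$ vanish to high order at $P$) has the same principal parts at $P$ as $\omega_0$. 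To connect $h_0\equiv 1$ to $h_1$, I apply Lemma \ref{lem:Weierstrass} to real-analytic arcs emanating from the $a_j$ along which $h_t$ carries moving zero--pole pairs: at $t=0$ the zero and pole of each pair coalesce and cancel, giving $h_0\equiv 1$, while at $t=1$ they have been arranged to yield the divisor of $h_1$. The interpolation part of the lemma, applied with $\Lambda=P$ and $\lambda(p)>m_p$, ensures that $h_t-1$ vanishes to sufficient order at every $p\in P$, so that $\varpi_t:=h_t\omega_0$ has the same principal parts at $P$ as $\omega_0$ for every $t$.

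The second step is the period correction. The family $\varpi_t$ is $\bA$-valued with residues at $P$ equal to those of $\omega_0$, but its real periods along a homology basis $\{C_j\}$ of $H_1(M,\Z)$, chosen disjoint from $P$, may no longer vanish. I invoke Proposition \ref{pr:multipliers} with $\theta_t:=\varpi_t$, starting multiplier $\varphi_t\equiv 1$, target periods $\qgot_j(t):=\oint_{C_j}\omega_0$ (which have vanishing real part since $u$ is well defined on $M\setminus P$), and interpolation data $\Lambda=P$, $\lambda(p)>m_p$; this produces a continuous family $\xi_t\colon M\to\C^*$ with $\xi_0\equiv 1$ such that $\oint_{C_j}\xi_t\varpi_t=\oint_{C_j}\omega_0$ for every $j$, and such that $\xi_t-1$ vanishes to order $>m_p$ at every $p\in P$. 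Setting $\omega_t:=\xi_t\varpi_t$, its residues at $P$ still coincide with those of $\omega_0$, hence $\omega_t$ has vanishing real periods along every closed curve in $M\setminus P$, and $\omega_t-\omega_0$ is a holomorphic $1$-form on $M$ with vanishing real periods. Integrating $\omega_t$ then yields $u_t\in\CMI_{\br,\s}(M,\R^n)$ with $u_0=u$, $\Ecal_{u_t}^\infty=P=\Ecal_u^\infty$, $\br(u_t)=\{\omega_t=0\}$, and in particular $\br(u_1)=\varnothing$; the difference $u_t-u=\Re\int(\omega_t-\omega_0)$ extends continuously across $P$ because $\omega_t-\omega_0$ is holomorphic there.

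The step I expect to be the main obstacle is the construction of $h_t$. The divisor of $h_t$ must evolve continuously from the trivial one at $t=0$ to a purely polar divisor at $t=1$; the intermediate $h_t$ will necessarily carry additional moving zeros, reflecting moving branch points of the intermediate surface $u_t$ (which is allowed by the theorem), and these must be choreographed so that the whole path is compatible with the parametric Weierstrass lemma and with the interpolation at $P$. When $\br(u)$ is infinite, the construction must be performed in parallel with a Runge exhaustion of $M$ by smoothly bounded compact sets, with Lemma \ref{lem:Weierstrass} applied recursively and the approximation parameter $\epsilon$ used at each stage to ensure convergence; after this, the period correction in step two, which uses an analogous exhaustion argument internal to the proof of Proposition \ref{pr:multipliers}, goes through as in the purely branched case of Theorem \ref{th:branched}.
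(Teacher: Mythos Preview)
Your proposal is correct and follows essentially the same route as the paper: write $2\partial u_t=\xi_t h_t\cdot 2\partial u$, build the scalar meromorphic path $h_t$ from Lemma~\ref{lem:Weierstrass} together with a Runge exhaustion that pushes the auxiliary moving zeros off to infinity (this is exactly the content of Proposition~\ref{prop:abelian}(b)), and then apply Proposition~\ref{pr:multipliers} with interpolation at $\Lambda=P$ to produce the period-correcting multiplier $\xi_t\in\Oscr^*(M)$. One point you pass over: Proposition~\ref{pr:multipliers} is stated only for \emph{full} families $\theta_t$, whereas Theorem~\ref{th:branched-2} does not assume $u$ is full; the paper deals with this in Remark~\ref{rem:nofull} by restricting to the linear span $H\subset\C^n$ of the image of $\omega_0/\theta$, in which every $\varpi_t=h_t\omega_0$ is automatically full, and applying the proposition with target space $H$.
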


The theorem says in particular that the inclusion 
$\CMI_\s(M,\R^n)\hra \CMI_{\br,s}(M,\R^n)$ induces a surjection of 
path components. In the proof, we shall need the following result. The notion 
of an abelian differential with values in a complex cone $A\subset\C^n$ 
was introduced in Subsect.\ \ref{ss:abelian}.

%
%  A path of abelian differentials
%
\begin{proposition}\label{prop:abelian}
Assume that $M$ is a connected open Riemann surface, $A\subset \C^n$ 
is a closed complex cone of positive dimension, and $\omega$ is a nontrivial 
abelian differential on $M$ with values in $A$. Then there is a path of nontrivial abelian differentials $\omega_t=h_t\omega$ $(t\in [0,1])$ with values in $A$, 
where $h_t$ is a path of meromorphic functions on $M$, satisfying $\omega_0=\omega$ (that is, $h_0\equiv1$) and either of the following 
conditions:
\begin{enumerate}[\rm (a)]
\item  $\omega_1 = h_1\omega$ is a holomorphic $1$-form on $M$ 
without zeros.
\item $h_t$ has no zeros for every $t\in [0,1]$, 
$\omega_t-\omega=(h_t-1)\omega$ is holomorphic on $M$ for every 
$t\in [0,1]$, and $\omega_1 = h_1\omega$ is an abelian differential 
on $M$ without zeros.
\end{enumerate}
\end{proposition}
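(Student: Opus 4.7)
The argument splits according to the two alternatives, and both rest on the cone property of $A$—so that scalar multiplication $h\mapsto h\omega$ preserves $A$-valuedness for any meromorphic scalar $h$—together with classical existence theorems for meromorphic functions on open Riemann surfaces: Florack's theorem that every divisor is principal, the Weierstrass construction with jet interpolation, and the parametric Lemma \ref{lem:Weierstrass}.

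For (a) the plan is short. I would first invoke Florack's theorem applied to the divisor $-(\omega)=(\omega)_\infty-(\omega)_0$ on $M$ to produce a meromorphic $h_1\in\Mcal(M)$ with $(h_1)=-(\omega)$. Then $\omega_1:=h_1\omega$ has trivial divisor, so it is a nowhere-vanishing holomorphic $1$-form, and it is $A$-valued by the cone property. The path is produced by the straight-line interpolation $h_t:=(1-t)+t\,h_1$, which is meromorphic and nontrivial for every $t\in[0,1]$ and yields a continuous family $\omega_t:=h_t\omega$ of $A$-valued nontrivial abelian differentials with $\omega_0=\omega$.

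For (b) the constraints are sharper: writing $k_j$ and $l_j$ for the orders of the zeros $b_j$ and poles $p_j$ of $\omega$, the holomorphy of $(h_t-1)\omega$ forces $h_t$ to be holomorphic off $\{b_j\}$ with pole of order at most $k_j$ at $b_j$, and imposes the jet identity $h_t-1=O(\zeta^{l_j})$ at each $p_j$ in a local coordinate $\zeta$. Combined with zero-freeness of $h_t$ and $\omega_1$, this pins down the target $h_1$: a meromorphic function on $M$ with polar divisor exactly $(\omega)_0=\sum_j k_j\,b_j$, no zeros, and the prescribed jet at each $p_j$. I would produce such $h_1$ via its reciprocal $g_1:=1/h_1\in\Oscr(M)$, a holomorphic function with zero divisor $(\omega)_0$ and $g_1-1=O(\zeta^{l_j})$ at $p_j$, furnished by the Weierstrass--Florack construction on $M$ combined with jet interpolation.

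The main obstacle is the construction of the continuous path $h_t$ itself. I would proceed multiplicatively, writing $h_t=\prod_j\eta_{j,t}^{k_j}$ where each $\eta_{j,t}$ is a nowhere-vanishing meromorphic function on $M$ whose polar divisor is supported at $b_j$ alone, with $\eta_{j,0}\equiv 1$ and $\eta_{j,t}-1=O(\zeta^{l_j})$ at every $p_j$ for all $t$. The parametric Weierstrass Lemma \ref{lem:Weierstrass} is the natural tool for each $\eta_{j,t}$, applied with moving points converging to $b_j$, suitable integer divisor data, and interpolation set $\Lambda=\{p_j\}$ with jet orders $\lambda(p_j)=l_j$. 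The delicate point, and the principal difficulty, is to arrange the construction so that no stray zeros of $\eta_{j,t}$ appear inside $M$ at any $t\in[0,1]$; this is where the openness of $M$ (allowing potential zeros to be pushed to the ideal boundary) and the Runge--Oka flexibility for maps into $\C^*=\C\setminus\{0\}$ are exploited.
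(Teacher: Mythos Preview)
Your affine interpolation for (a) is elegant and works when $\omega$ has only zeros: then $\omega_t=(1-t)\omega+t\omega_1$ is holomorphic on $M$ for every $t$ and varies continuously. But when $\omega$ has a pole at some $p$, the path is discontinuous at $t=1$ in the relevant topology (compact-open on maps $M\to\CP^n$): for $t<1$ the form $\omega_t$ retains a pole of the full order at $p$, while $\omega_1$ is holomorphic there, so $\omega_t/\theta$ sends $p$ into the hyperplane at infinity for all $t<1$ and to a finite point at $t=1$. A pole at a fixed location cannot disappear along a continuous one-parameter family---its order is a positive integer. This is exactly why the paper proceeds via an exhaustion $K_0\subset K_1\subset\cdots$ together with Lemma~\ref{lem:Weierstrass}: the zeros and poles of $\omega_t$ are carried along real-analytic arcs $a_j(t)$ that exit each $K_i$ in turn, so on any compactum the singular support is eventually empty and the limit is holomorphic and zero-free.

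Your plan for (b) hits the same obstruction from the other side. You ask for $\eta_{j,0}\equiv1$ while $\eta_{j,t}$ has its polar divisor supported at the \emph{fixed} point $b_j$ for $t>0$; the pole order at $b_j$ must then jump from $0$ to a positive integer, a discontinuity that Lemma~\ref{lem:Weierstrass} cannot produce (its output has divisor $\sum_j n_ja_j(t)$ with $a_j(t)\in M$ for every $t$). More structurally: since $(h_t-1)\omega$ is to be holomorphic, the poles of $h_t$ are confined to the fixed discrete zero set of $\omega$, so the polar divisor of $h_t$ takes integer values on that fixed set and cannot interpolate continuously between $0$ and $(\omega)_0$ while $h_t$ remains zero-free. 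The infinite product $\prod_j\eta_{j,t}^{k_j}$ also needs a convergence argument when $\omega$ has infinitely many zeros. The paper handles (b) by the same exhaustion scheme as (a): at each stage the finitely many zeros of the current differential lying in $K_i$ are moved along paths into $M\setminus K_i$, using the interpolation clause of Lemma~\ref{lem:Weierstrass} on $\Lambda=\supp(\omega)_\infty$ to keep $\omega_t-\omega$ holomorphic throughout.
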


\begin{proof}
Write $(\omega) = \sum_{p\in M} k(p) p =  (\omega)_0-(\omega)_\infty$
(see Subsect.\ \ref{ss:abelian}).

We first explain how to obtain a path $\omega_t$ $(t\in [0,1])$ as in the 
statement satisfying condition (a). Choose a normal exhaustion 
$K_0\subset K_1\subset\cdots\subset \bigcup_{i=0}^\infty K_i=M$
by compact Runge sets such that 
$\supp(\omega)\cap K_0=\varnothing$.
Let $\supp(\omega) \cap K_1=\{p_1,\ldots,p_m\}$ and set 
$n_j=k(p_j)$ for $j=1,\ldots,m$. For every $j=1,\ldots,m$
we choose a real analytic path  $a_j:[0,1]\to M\setminus K_0$ 
such that $a_j(0)=p_j$ and $a_j(1)\in M\setminus K_1$.
Pick a number $\epsilon_1>0$.
Lemma \ref{lem:Weierstrass} furnishes a path
$\{f^1_t\}_{t\in [0,1]}$ of meromorphic functions on $M$ with divisors  
\begin{equation}\label{eq:lem2.1.1}
	(f^1_t)=\sum_{j=1}^m n_j a_j(t),\quad t\in [0,1],
\end{equation} 
such that 
\begin{equation}\label{eq:lem2.1.2}
	\max_{x\in K_0, \, t\in [0,1]}|f^1_t(x)-1|<\epsilon_1.
\end{equation}
Note that $a_j(0)=p_j$, so the divisor 
$(f^1_0)=\sum_{j=1}^m n_j p_j$ is precisely the 
part of the divisor $(\omega)$ lying in $K_1$. Hence, the $1$-form
$\omega'=\frac{1}{f^1_0}\omega$ has no zeros or poles on $K_1$.
Consider the path of abelian differentials 
\begin{equation}\label{eq:lem2.1.3}
	\omega_t = \frac{f^1_t}{f^1_0} \omega=f^1_t\omega',
	\quad t\in [0,1].
\end{equation}
We have that $\omega_0=\omega$, $\omega_1=f^1_1\omega'$, 
$\supp(\omega_t) \cap K_0=\varnothing$ for all $t\in [0,1]$,
$\supp(\omega_1)\cap K_1=\varnothing$, and 
$\omega_t$ approximates $\omega_0$ on $K_0$ for all $t\in [0,1]$.
We now repeat the same procedure with the abelian differential 
$\omega_1$ in order to find a path of meromorphic functions 
$\{f^2_t\}_{t\in [1,2]}$ on $M$ such that the divisor $(f^2_1)$ agrees 
with the part of the divisor $(\omega_1)$ on $K_2$,
$\supp (f^2_t)\cap K_1=\varnothing$ for all $t\in [1,2]$, 
$\supp (f^2_2)\subset M\setminus K_2$, and 
\[
	\max_{x\in K_1,\, t\in [1,2]}|f^2_t(x)-1|<\epsilon_2
\] 
for a given $\epsilon_2>0$. Set 
\[
	\omega_t = \frac{f^2_t}{f^2_0} \omega_1,\quad t\in [1,2].
\]
Then, $\supp(\omega_t) \cap K_1=\varnothing$ for all $t\in [1,2]$,
$\supp(\omega_2) \cap K_2 =\varnothing$, and $\omega_t$
approximates $\omega_1$ uniformly on $K_1$ for all $t\in [1,2]$.
Continuing inductively, we obtain a path of abelian differentials
$\omega_t$, $t\in [0,\infty)$, such that 
$\supp(\omega_t)\cap K_j =\varnothing$ for all $t\ge j$ and $j=0,1,\ldots$.
Choosing $\epsilon_j>0$ small enough at every step,
the approximation conditions ensure that 
$\omega_\infty=\lim_{t\to\infty}\omega_t$ 
is an abelian differential without zeros or poles on $M$. 
It remains to reparametrise the interval $[0,\infty]$ to $[0,1]$.
This explains part (a) of the proposition.

We now explain how to modify the above argument to obtain 
a path of abelian differentials 
$\omega_t$, $t\in [0,1]$, satisfying condition (b). 
Set $\Lambda=\supp(\omega)_\infty$ and write 
$(\omega)_\infty=\sum_{p\in\Lambda}\lambda(p)p$. Choose 
$K_0\subset K_1\subset\cdots$ as above, let 
$\supp(\omega)_0 \cap K_1=\{p_1,\ldots,p_m\}$ and set $n_j=k(p_j)>0$ 
for $j=1,\ldots,m$. Choose analytic paths 
$a_j:[0,1]\to M\setminus K_0$ $(j=1,\ldots,m)$ 
such that $a_j(0)=p_j$, $a_j(1)\in M\setminus K_1$, and 
$\Lambda\cap\bigcup_{j=1}^m a_j([0,1])=\varnothing$. 
Lemma \ref{lem:Weierstrass} provides a path
$\{f^1_t\}_{t\in [0,1]}$ of holomorphic functions on $M$ satisfying \eqref{eq:lem2.1.1}, \eqref{eq:lem2.1.2}, and
\begin{equation}\label{eq:ft1-1}
	\text{$f_t^1-1$ vanishes to order $\lambda(p)$ at $p$ 
	for every $p\in \Lambda$.}
\end{equation}
Note that $(f_0^1)=\sum_{j=1}^m n_jp_j$ is the part
of $(\omega)_0$ lying in $K_1$, 
$\omega'=\frac1{f_0^1}\omega$ has no zeros on $K_1$, and the path 
of abelian differentials $\omega_t=f^1_t\omega'$ $(t\in [0,1])$ defined as in \eqref{eq:lem2.1.3} satisfies $\omega_0=\omega$, 
$\supp(\omega_t)_0 \cap K_0=\varnothing$ for all $t\in [0,1]$,
$\supp(\omega_1)_0\cap K_1=\varnothing$, 
$\omega_t$ approximates $\omega_0$ on $K_0$ for all $t\in [0,1]$, and
the difference $\omega_t-\omega$ is holomorphic for all $t\in [0,1]$ 
as guaranteed by \eqref{eq:ft1-1}.  
Repeating the same procedure in a recursive way as above leads to a path 
of abelian differentials $\omega_t$ $(t\in [0,\infty])$ that, after reparametrising 
$[0,\infty]$ to $[0,1]$, satisfies the required properties.
\end{proof}

\begin{proof}[Proof of Theorem \ref{th:branched-2}]
Let $u_0\in \CMI_{\br,\s}(M,\R^n)$.
Then, $\omega_0=2\di u_0$ is an abelian differential on $M$ 
with values in the null quadric  $\bA$ \eqref{eq:nullq} whose divisor satisfies 
$\supp (\omega_0)=\Ecal_{u_0}^0\cup \Ecal_{u_0}^\infty=\br (u_0)\cup P(u_0)$
(see \eqref{eq:Ecal} and Subsect.\ \ref{ss:abelian} for the notation). 
Let $\{\omega_t\}_{t\in [0,1]}$ be a path of nontrivial abelian differentials 
on $M$ with values in $\bA$, provided by Proposition \ref{prop:abelian} (b), 
so $\omega_t=h_t\omega_0$ for some meromorphic function $h_t$ on $M$ 
and $\omega_t-\omega_0$ is holomorphic for all $t\in [0,1]$,
and $\omega_1$ has no zeros. In particular, 
$(\omega_t)_\infty=(\omega_0)_\infty$ for all $t\in [0,1]$.
Let $\Lambda=\supp(\omega_0)_\infty$ and write 
$(\omega_0)_\infty=\sum_{p\in\Lambda}\lambda(p)p$. 
Let $\{C_j: j\in I\subset \N\}$ be a collection of smoothly embedded oriented 
Jordan curves in $M$ determining a homology basis of $M$ such that 
$\bigcup_{j\in J}C_j$ is Runge in $M$ for every finite set $J\subset I$, 
each curve $C_j$ contains a nontrivial arc $\wt C_j$ disjoint 
from $C_i$ for all $i\in I\setminus \{j\}$, and $C_j\cap\Lambda=\varnothing$ 
for all $j\in I$. 
Since the real part $\Re(\omega_0)$ of $\omega_0$ is exact on $M$, 
we have that $\Re\int_{C_j}\omega_0=0$ for every $j\in I$.
Proposition \ref{pr:multipliers} then furnishes 
a path  $\{\xi_t\}_{t\in [0,1]} \subset \Oscr^*(M)$ of nowhere vanishing holomorphic functions on $M$, with $\xi_0=1$, such that 
\begin{equation}\label{eq:xit-1}
	\text{$\xi_t-1$ vanishes to order $\lambda(p)$ at $p$ for every 
	$p\in \Lambda$ and $t\in [0,1]$}, 
\end{equation}
and 
\begin{equation}\label{eq:Rexit}
	\text{$\Re\int_{C_j}\xi_t\omega_t=0$\ \ for every $j\in I$ and $t\in [0,1]$.}
\end{equation} 
Since $(\omega_t)_\infty=\sum_{p\in\Lambda}\lambda(p)p$, condition 
\eqref{eq:xit-1} ensures that $\xi_t\omega_t-\omega_t$ is holomorphic 
on $M$, and hence so is $\xi_t\omega_t-\omega_0$ for every $t\in [0,1]$
(recall that $\omega_t-\omega_0$ is holomorphic on $M$). Thus, taking into account that the curves $C_j$ $(j\in I)$ are a homology basis of $M$ and 
$\Re(\omega_0)$ is exact on $M$, \eqref{eq:Rexit} implies 
that $\Re(\xi_t\omega_t)$ is exact on $M$ as well for every $t\in [0,1]$.
Therefore, the real parts of the abelian differentials $\xi_t\omega_t$ 
integrate by the Weierstrass formula \eqref{eq:W-formula} to a path of 
conformal minimal surfaces $u_t\in \CMI_{\br,\s}(M,\R^n)$ $(t\in [0,1])$ 
such that $u_0$ is the given initial map and for every $t\in [0,1]$
we have $\br(u_t)=\supp (\xi_t\omega_t)_0=\supp (\omega_t)_0$, 
$P(u_t)=\supp (\xi_t\omega_t)_\infty=\supp (\omega_t)_\infty=P(u_0)$, 
and $u_t-u$ is continuous on $M$; recall that each $\xi_t$ has neither 
zeros nor poles. In particular, $\br(u_1)=\supp(\omega_1)_0=\varnothing$, 
and hence $u_1\in \CMI_{\s}(M,\R^n)$.
Finally, since $2\partial u_t=\xi_t\omega_t=2\xi_t h_t \partial u_0$, 
it is clear that the Gauss map $\Gscr(u_t)$ of $u_t$ equals  
$\Gscr(u_0)$ on $M\setminus(\br(u_t)\cup\br(u_0)\cup P(u_0))$ 
for every $t\in [0,1]$; see \eqref{eq:GaussMap}. 
\end{proof}

\begin{remark}\label{rem:nofull}
A comment is in order regarding the use of Proposition \ref{pr:multipliers} 
in the proof of Theorem \ref{th:branched-2}.
Proposition \ref{pr:multipliers} is stated for a path of 
full abelian differentials $\theta_t$ on $M$. 
In our situation, $\omega_t$ takes values in the 
null quadric $\bA$ \eqref{eq:nullq} and is not assumed to be full. 
However, there is a $\C$-linear subspace $H\subset\C^n$ 
such that $\omega_0$ is full in $H$ (meaning that $\omega_0/\theta:M\to H$ 
is full for any nowhere vanishing holomorphic $1$-form $\theta$ on $M$), 
and the same is then true for every $\omega_t=h_t\omega_0$ $(t\in [0,1])$ 
in the path given by the proposition. Applying Proposition \ref{pr:multipliers} 
to the path $\omega_t$ with values in $H$ gives a family of multipliers 
$\{\xi_t\}_{t\in [0,1]} \subset \Oscr^*(M)$ with the properties stated in 
the proof of Theorem \ref{th:branched-2}. 
\end{remark}

Let us record here the following extension of Theorem \ref{th:branched-2} 
which follows by a straightforward modification of the proof.

\begin{corollary}\label{co:branched-2}
If $M$, $n$, and $u$ are as in Theorem \ref{th:branched-2} and 
$C\subset M\setminus  \Ecal_u^\infty$ is a (possibly empty) closed discrete 
subset of $M$ that is in bijection with a subset of $\br(u)$, then there is an isotopy $u_t \in \CMI_{\br,\s}(M,\R^n)$, $t\in [0,1]$, such that $u_0=u$, 
$u_t-u$ is continuous on $M$ for all $t\in [0,1]$, and $\br(u_1)=C$. 
Furthermore, we can choose the isotopy such that for each $t\in [0,1]$ the 
Gauss map $\Gscr(u_t)$ of $u_t$ equals $\Gscr(u)$ in 
their common domain of definition 
$M\setminus(\br(u_t)\cup\br(u)\cup \Ecal_u^\infty)$.
\end{corollary}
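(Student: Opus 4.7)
The plan is to adapt the proof of Theorem \ref{th:branched-2} by re-routing some of the zeros of $\omega_0=2\partial u$ to the prescribed targets in $C$ instead of pushing them all toward infinity. Let $\phi:C\hookrightarrow \br(u)$ be an injection whose image $B=\phi(C)$ is the given subset, and set $\Lambda=\Ecal_u^\infty$ with the pole data $(\omega_0)_\infty=\sum_{p\in\Lambda}\lambda(p)p$. The only thing that needs reworking is Proposition \ref{prop:abelian}(b): we must produce a path of abelian differentials $\omega_t=h_t\omega_0$ with values in $\bA$, with $h_t$ meromorphic without zeros, $\omega_t-\omega_0$ holomorphic for every $t$, and $(\omega_1)_0=\sum_{c\in C}k(\phi(c))\,c$ instead of $(\omega_1)_0=0$.

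First I would choose a normal exhaustion $K_0\subset K_1\subset\cdots$ by compact Runge sets with $\Lambda\cap K_0=\varnothing$, arranged so that at each step $i\ge 1$ only finitely many of the relevant points (branch points of $\omega_{i-1}$ inside $K_i$ and points of $C\cap K_i$) are processed. At step $i$, I would choose real analytic paths $a_j:[i-1,i]\to M\setminus(\Lambda\cup K_{i-1})$, with pairwise disjoint graphs, of two kinds: \emph{(i)} for every branch point $p_j=a_j(i-1)$ that corresponds via $\phi$ to some $c_j\in C$, a path ending at $a_j(i)=c_j$ (and frozen at $c_j$ for all later times); \emph{(ii)} for the remaining branch points, a path with $a_j(i)\in M\setminus K_i$, as in the original proof. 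Disjointness of graphs is automatic by small perturbation since $M$ is a surface and the endpoints are all distinct. Lemma \ref{lem:Weierstrass}, applied inductively with the interpolation datum $(\Lambda,\lambda)$, then yields meromorphic functions $f_t^i$ with the prescribed principal divisors, approximating $1$ on $K_{i-1}$ and satisfying $f_t^i-1\equiv 0$ to order $\lambda(p)$ at each $p\in\Lambda$. Setting $\omega_t=(f_t^i/f_0^i)\cdot\omega_{i-1}$ on $[i-1,i]$ and iterating as in Proposition \ref{prop:abelian}(b) produces a path with the required divisor data; after reparametrising $[0,\infty]$ to $[0,1]$ one has $(\omega_1)_0$ supported precisely on $C$ with the multiplicities determined by $\phi$.

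The final step is exactly as in the proof of Theorem \ref{th:branched-2}: apply Proposition \ref{pr:multipliers} (in the form used there, with the fullness subtlety addressed by Remark \ref{rem:nofull}) to the path $\omega_t$, with initial multiplier $\varphi_0\equiv 1$ and interpolation order $\lambda(p)$ at each $p\in\Lambda$, to obtain $\xi_t\in\Oscr^*(M)$ with $\xi_0\equiv 1$ and $\Re\int_{C_j}\xi_t\omega_t=0$ for every curve $C_j$ in a chosen homology basis. Because $\xi_t$ is nowhere-vanishing, multiplication by $\xi_t$ does not alter the zero divisor of $\omega_t$, and the interpolation at $\Lambda$ ensures $\xi_t\omega_t-\omega_0$ stays holomorphic, so the Weierstrass formula \eqref{eq:W-formula} integrates the real parts to a path $u_t\in\CMI_{\br,\s}(M,\R^n)$ with $u_0=u$, $u_t-u$ continuous on $M$, $P(u_t)=P(u)$, and $\br(u_1)=\supp(\omega_1)_0=C$. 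The Gauss-map identity $\Gscr(u_t)=\Gscr(u)$ follows, as before, from $\partial u_t=\xi_t h_t\,\partial u$.

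The main obstacle I anticipate is the combinatorial bookkeeping of the real analytic paths when both $C$ and $\br(u)\setminus\phi(C)$ are infinite and the two species of paths must coexist at each stage of the exhaustion: one needs to ensure that the \emph{(i)}-paths freeze at their targets in $C$ without colliding with subsequent \emph{(ii)}-paths that still have to move past them toward infinity. This is arranged by enlarging $K_i$ enough at each step so that all \emph{(i)}-endpoints of $C\cap K_i$ lie deep inside $K_i$ and the \emph{(ii)}-paths can be routed through a thin collar of $K_i\setminus K_{i-1}$ that avoids $C\cup\Lambda$; this is possible because $C$, $\Lambda$, and the current branch set are closed discrete and disjoint. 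Apart from this scheduling issue, the construction is a direct enhancement of the proofs of Proposition \ref{prop:abelian}(b) and Theorem \ref{th:branched-2}.
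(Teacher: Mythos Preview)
Your proposal is correct and follows the approach the paper has in mind: the paper does not prove Corollary \ref{co:branched-2} in detail but simply states that it ``follows by a straightforward modification of the proof'' of Theorem \ref{th:branched-2}, and your sketch is precisely such a modification---reroute the zeros of $\omega_0=2\partial u$ along real analytic paths that terminate at the prescribed points of $C$ (instead of escaping to infinity) in Proposition \ref{prop:abelian}(b), then correct periods with Proposition \ref{pr:multipliers} exactly as before. Your identification of the scheduling issue is apt; the point that makes it work is that at stage $i$ you process not only the branch points of $\omega_{i-1}$ lying in $K_i$ but also every target $c\in C\cap K_i$, so each $c$ is reached at the first step at which it enters the exhaustion and hence while it still lies in $M\setminus K_{i-1}$, keeping all paths in $M\setminus(K_{i-1}\cup\Lambda)$ as required by Lemma \ref{lem:Weierstrass}.
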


%
%  REMOVING FTC ENDS
%
\section{Removing complete ends of finite total curvature}\label{sec:FTC}

\noindent
In this section, we prove Theorem \ref{th:sing}. In view of 
Theorem \ref{th:branched-2}, this will also yield Corollary \ref{cor:main}.

Let $M$ be an open Riemann surface and $u\in \CMI_\s^\f(M,\R^n)$
for some $n\ge 3$. Denote by $P=P(u) =\{p_j\}_j \subset M$ 
the closed discrete set of poles of the abelian differential $\di u$. 
We shall construct a path $P_t=\{p_j(t)\}_j \subset M$
$(t\in [0,1))$ of closed discrete subsets such that the graphs 
of the paths $p_j(t)$ in $[0,1]\times M$ are pairwise disjoint, 
$p_j(0)=p_j$ and $p_j(t)$ diverges to infinity in $M$ as $t\to 1$ 
for every $j$, and an isotopy $u_t:M\setminus P_t\to\R^n$ of conformal
minimal immersions with a complete end of finite total curvature at every
point of $P_t,\ t\in [0,1)$ (that is, $P(u_t)=P_t$) 
such that the limit $u_1=\lim_{t\to 1} u_t:M\to \R^n$ exists and 
is a conformal minimal immersion without singularities.

Fix a holomorphic immersion $z:M\to \C$, which therefore
provides a local holomorphic coordinate on $M$ 
on a neighbourhood of any point.
Choose a normal exhaustion $K_0\subset K_1\subset K_2\subset \cdots$ 
of $M$ by smoothly bounded compact Runge sets, 
each contained in the interior of the
next one, such that $P\cap K_0=\varnothing$. 
We shall proceed inductively, using the parameter 
interval $[i,i+1]\subset \R$ in the $i$-th step of the induction 
and finally reparametrising $[0,+\infty]$ 
to $[0,1]$ as in the proof of Proposition \ref{prop:abelian}. 

We begin by explaining the initial step of the construction with $i=0$; 
every subsequent step will be of the same kind. 
Choose real analytic paths $p_j(t)\in M\setminus K_0$, 
$t\in [0,1]$, with pairwise disjoint graphs in $[0,1]\times M$
such that $p_j(0)=p_j$ and $p_j(1)\in M\setminus K_1$ for all $j=1,2,\ldots$. 
The path $p_j(t)$ is chosen to be independent of $t$ if $p_j\in M\setminus K_1$, 
which holds for all but finitely many $j$. Let $\gamma_j(t)=(t,p_j(t))$ for 
$t\in [0,1]$. There is a convex neighbourhood $D\subset \C$ of $[0,1]$
such that every $p_j$ extends from $[0,1]$ to a holomorphic map
$p_j:D\to M$, and $\Gamma_j=\gamma_j(D)$ is a family of pairwise disjoint 
closed complex curves (graphs of $p_j$ over $D$) in the 
Stein surface $D\times M$. Let $V_j$ denote the holomorphic vector field 
on $D\times M$ given by 
\[
	V_j=\frac{\di}{\di t} + \dot p_j(t)\frac{\di}{\di z}\Big|_{x},
\]
where $t$ is the coordinate on $\C$ and $z:M\to \C$ is the holomorphic
immersion. Note that $V_j$ is tangent to the complex curve $\Gamma_j$.
Let $\phi_{j,t}(x)=(t,\varphi_{j,t}(x))$ denote the holomorphic 
flow of $V_j$ satisfying the initial condition $\varphi_{j,0}(x)=x$. 
Shrinking $D$ around $[0,1]$ if necessary, the flow is defined 
for all $t\in D$ and all $x$ in a disc neighbourhood $U_j\subset M$ 
of the point $p_j=p_j(0)$ for every $j$, and the map 
\[
	\varphi_{j,t}: U_j\to U_{j,t}:=\varphi_{j,t}(U_j) \subset M
\] 
is biholomorphic and satisfies $\varphi_{j,t}(p_j)=p_j(t)$ 
for every $t\in D$ and every $j$.  
Note that for all but finitely many $j$ we have $V_j=\frac{\di}{\di t}$
and hence $\varphi_{j,t}$ is the identity on $U_j$ for all $t\in D$. 
Choosing the discs $U_j$ small enough, we may assume 
that the sets $\{U_{j,t}\}_j$ are pairwise disjoint for all $t\in D$,
and they are also disjoint from a neighbourhood $W_0\subset M$
of $K_0$. Write $2\di u_0=f_0\theta$. 
Consider the path of abelian differentials $\omega_t$ on the 
domains $\wt U_t:=W_0\cup \bigcup_{j}U_{j,t} \subset M$, $t\in D$, 
defined by 
\begin{equation}\label{eq:omegatdef}
	\omega_t = 
	\begin{cases}
                    (\varphi_{j,t}^{-1})^* (f_0\theta)    
                    &  \text{on $U_{j,t}$}, \\
                    f_0\theta & \text{on $W_0$}. 
        \end{cases}
\end{equation}
Note that $\omega_t$ is full, it assumes values in the null quadric 
$\bA_*$ \eqref{eq:nullq} for every $t\in D$,   
it has a pole of order $n_j\ge 2$ at the point $p_j(t)$ 
for every $j$ (with $n_j$ independent of $t$), and 
it has no other zeros or poles. Furthermore, the residue of 
$\omega_{t}$ at $p_j(t)$ is independent of $t\in D$, so it 
has vanishing real part (as this holds for $\omega_0=2\di u_0$). 
Write $\omega_t=f_t\theta$ for $t\in D$. 
Then, $f_t:\wt U_t \cup W_0\to Y=\bA_*\cup Q$ 
is a holomorphic map depending holomorphically on $t\in D$,
and $f_0=2\di u_0/\theta$ is holomorphic on all of $M$. 
(Recall that $Q\subset \CP^{n-1}$ is the hyperquadric \eqref{eq:Q}.) 
Writing $f(t,x)=f_t(x)$, the map $f$ with values in $Y$
is holomorphic on the open set 
\[
	O:= (D\times W_0) \cup \bigcup_{t\in D} (\{t\}\times \wt U_t)  
	\subset D\times M
\]
and on the complex submanifold $\{0\}\times M$. Note that 
$f^{-1}(Q)=\Gamma:=\bigcup_{j} \Gamma_j$ is a complex
submanifold contained in $O$. 
It is obvious that $f$ extends from a somewhat smaller
open set containing $(D\times K_0) \cup \Gamma$ to a continuous 
map $f:D\times M\to Y$ which agrees with $f_0$ on $\{0\}\times M$ 
and maps $(D\times M)\setminus \Gamma$ to $\bA_*$. 

Let $\{C_l: l\in I\subset\N\}$ be a collection of smooth oriented Jordan curves in
$M$ determining a homology basis of $M$ such that $\bigcup_{l\in J} C_l$ is a 
Runge compact set in $M$ for every finite set $J\subset I$, and each curve 
$C_l$ contains a nontrivial arc $\wt C_l$ that is disjoint form $C_i$ for all 
$i\in I\setminus\{l\}$. In addition, we choose these curves such that 
$C\cap p_j([0,1])=\varnothing$ for every $j=1,2,\ldots$ where 
$C=\bigcup_{l\in I}C_l$, and $C_1,\ldots,C_\ell$ determine a homology basis 
of $K_0$. We assume that $\ell>0$ since the proof is much simpler otherwise.
Let $\Pcal^t=(\Pcal^t_1,\ldots,\Pcal^t_\ell):\Cscr(C^0)\to(\C^n)^\ell$ 
be the period map whose $l$-th component is given by
\[
	\Pcal^t_l(g)=\int_{C_l}g\omega_t,\quad g\in \Cscr(C_l).
\]
Up to shrinking the domain $D\supset [0,1]$, 
Lemma \ref{lem:3.2} furnishes a nowhere vanishing holomorphic function 
$\Xi:D\times\C^N\times M\to\C^*$ satisfying the following conditions:
\begin{enumerate}[\rm (I)]
\item $\Xi(t,\zeta,\cdot)-1$ vanishes to order $n_j$ at $p_j(t)$ for every 
$t\in [0,1]$, $\zeta\in\C^N$, and $j=1,2,\ldots$. 
\item The map 
$\C^N\ni\zeta\longmapsto \Pcal^t\big(\Xi(t,\zeta,\cdot)\big)\in (\C^n)^\ell$ 
has maximal rank at $\zeta=0$ for each $t\in [0,1]$.
\item $\Xi(\cdot,0,\cdot)\equiv 1$.
\end{enumerate}

Recall that the manifolds $Q\subset \CP^{n-1}$ \eqref{eq:Q}, 
$Y=\bA_*\cup Q\subset \CP^n$ \eqref{eq:Y}, and $\bA_*=Y\setminus Q$ 
are Oka manifolds. Indeed, $\bA_*$ is a homogeneous space
of the complex Lie group $O(n,\C)$, and hence an Oka manifold
by Grauert's theorem \cite{Grauert1958MA}. 
(See also \cite[Proposition 5.6.1 and Example 5.6.2]{Forstneric2017E}.) 
The projection $\pi:\bA_*\to Q$, 
$\pi(z_1,\ldots,z_n)= [z_1:\cdots:z_n]$, is a holomorphic
fibre bundle with Oka fibre $\C^*=\C\setminus \{0\}$, so 
$Q$ is Oka by \cite[Theorem 5.6.5]{Forstneric2017E}.
Finally, $\pi: Y\to Q$ is a holomorphic line bundle, so 
$Y$ is Oka by the same theorem.

Recall that $n_j\ge 2$ denotes the order of the pole of 
$f_0=2\di u_0/\theta$ at the point $p_j$ for $j=1,2,\ldots$. 
By the Oka principle (see \cite[Theorem 5.4.4]{Forstneric2017E}), 
there is a holomorphic map $F:D\times M\to Y$ 
which agrees with $f$ to order $n_j$ along $\Gamma_j$
for every $j$, it agrees with $f(0,\cdotp)$ on $\{0\}\times M$,
and it approximates $f$ as closely as desired uniformly on 
$[0,1]\times K_0$. Moreover, after shrinking
$D$ around $[0,1]$ we can ensure that
$F^{-1}(Q)=f^{-1}(Q)=\Gamma$, that is, $F$ maps 
$(D\times M)\setminus \Gamma$ to the Oka 
domain $\bA_*=Y\setminus Q\subset Y$.
This can be obtained by inductively using  
\cite[Theorem 1.3]{Forstneric2023Indag}. 
Then, $\theta_t=F(t,\cdotp) \theta$ $(t\in [0,1])$ 
is an analytic path of abelian differentials on $M$ 
with values in $\bA_*$ satisfying the following conditions.
\begin{enumerate}[\rm (i)] 
\item $\theta_0=\omega_0=2\di u_0$.
\item $\theta_t-\omega_t$ is holomorphic near $p_j(t)$
for every $t\in [0,1]$ and $j=1,2,\ldots$ 
\item $\theta_t$ approximates $\omega_t$ 
uniformly on $K_0$ and uniformly in $t\in [0,1]$.
\item $\theta_t$ has no zeros on $M$ for any $t\in [0,1]$
and its polar locus is $P_t=\{p_j(t)\}_j$.
\end{enumerate}
Assuming that the approximation of $f$ by $F$ is close enough,
every $\theta_t$ is full, and the implicit function theorem provides 
in view of conditions (II), (III), and \eqref{eq:omegatdef} 
a path $\beta:[0,1]\to\C^N$ such that $\beta(0)=0$, $g_t:=\Xi(t,\beta(t),\cdot)$ 
is uniformly close to $1$ on $K_0$ for all $t\in [0,1]$, and the continuous 
family of abelian differential $g_t\theta_t$ $(t\in [0,1])$ satisfies
\[
	\Re\int_{C_l} g_t\theta_t=\Re(\Pcal^t_l(1))
	=\Re\int_{C_l} \omega_t
	=\Re\int_{C_l} f_0\theta=0,\quad l=1,\ldots,\ell.
\]
(Cf.\ \cite[proof of Proposition 3.1]{AlarconLopez2025AMPA}.
Note that fullness of $\omega_t$ $(t\in[0,1])$ has been used here in an
important way to kill the real periods, which is seen in condition (II) above.)
Thus, taking into account conditions (I) and (III) and replacing $\theta_t$ by 
$g_t\theta_t$, we may assume in addition to conditions (i)--(iv) above that
\begin{equation}\label{eq:periodsthetat}
	\Re\int_{C_l}\theta_t=0 \quad 
	\text{for every $l\in \{1,\ldots,\ell\}$ and $t\in [0,1]$}.
\end{equation}
(Note that $g_0\equiv 1$.)
Proposition \ref{pr:multipliers} then furnishes a path of holomorphic 
functions $h_t:M\to\C^*$ $(t\in [0,1])$ such that 
$h_0\equiv 1$ and the following conditions hold for every $t\in [0,1]$. 
\begin{enumerate}[\rm (a)]
\item $\Re \int_{C_l} h_t\theta_t=0$ for every $l\in I$; 
take into account \eqref{eq:periodsthetat} and that 
$\Re \int_{C_l} \theta_0=2\Re\int_{C_l}\partial u_0=0$ for all $l\in I$.
\item $h_t-1$ vanishes to order $n_j$ at the point $p_j(t)$ for every 
$j=1,2,\ldots$.
\item $h_t$ is uniformly close to $1$ on $K_0$.
\end{enumerate}
Condition (b) implies that the full abelian differential $h_t\theta_t$ has the 
same residue as $\theta_t$ at the point $p_j(t)$ for every $j$ and $t$. 
By the construction, this agrees with the residue of $\theta_0=2\di u_0$ at 
$p_j=p_j(0)$, so its real part vanishes. Hence, condition (a) implies that 
$\Re(h_t\theta_t)$ is exact on $M$. 
Therefore, taking also (c) into account and replacing $\theta_t$ 
by $h_t\theta_t$, we may assume in addition to conditions (i)--(iv) above that 
$\Re(\theta_t)$ is exact on $M$ for every $t\in [0,1]$. It follows that the path 
of abelian differentials $\theta_t$ integrates by the Weierstrass
formula \eqref{eq:W-formula} to a path of maps $u_t \in \CMI_\s^\f(M,\R^n)$ 
with $2\di u_t=\theta_t$ for every $t\in [0,1]$, so $u_0=u$ (see (i)), 
$P(u_t)=P_t$ (see (iv)), and $u_t$ approximates $u_0$ on $K_0$
(see (iii) and \eqref{eq:omegatdef}) for every $t\in [0,1]$. 
In particular, the minimal surface $u_1\in \CMI_\s^\f(M,\R^n)$ 
has no singularities on $K_1$. This completes the first step of the construction. 

In the second step, we apply the same procedure, starting with $u_1$ 
and finding a path $\{u_t\}_{t\in [1,2]} \in \CMI_\s^\f(M,\R^n)$ such that 
for all $t\in [1,2]$ we have $P(u_t) \subset M\setminus K_1$,
$u_t$ approximates $u_1$ on $K_1$, and $u_2$ is nonsingular on $K_2$. 
Clearly, the induction may be continued so that the limit 
$u_\infty=\lim_{t\to\infty} u_t$ exists 
uniformly on compacts in $M$ and $u_\infty\in \CMI(M,\R^n)$.
Since all maps $u_t\in \CMI_\s^\f(M,\R^n)$ in the construction approximate 
$u_0$ on $K_0$, which is full, we can ensure that they all are full as well.

This completes the proof of Theorem \ref{th:sing}.

The following extension of this result, concerning moving complete ends 
of finite total curvature within the surface, follows by a straightforward 
modification of the proof. Recall the notation in \eqref{eq:Ecal}.

\begin{corollary}\label{co:sing}
If $M$, $n$, and $u$ are as in Theorem \ref{th:sing} and $C\subset M$ is a 
(possibly empty) closed discrete subset that is in bijection with a subset of 
$\Ecal_u^\infty$, then there is an isotopy $u_t \in \CMI_\s^\f(M,\R^n)$, 
$t\in [0,1]$, such that $u_0=u$ and $\Ecal_{u_1}^\infty=C$. 
\end{corollary}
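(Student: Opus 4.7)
The plan is to adapt the inductive construction used in the proof of Theorem \ref{th:sing} so that each pole of $u$ is steered toward a prescribed destination in $M \cup \{\infty\}$, instead of every pole being pushed to infinity. Write $P := \Ecal_u^\infty = \{p_j\}_j$, let $\tau : C \to P_C \subset P$ denote the given bijection, and fix a normal exhaustion $K_0 \subset K_1 \subset \cdots$ of $M$ by smoothly bounded compact Runge sets together with a homology basis of embedded Jordan curves $\{C_l : l \in I\}$ as in Section \ref{sec:FTC}. For each $p_j \in P_C$ I would fix a real-analytic arc $p_j : [0,1] \to M$ with $p_j(0) = p_j$ and $p_j(1) = \tau^{-1}(p_j) \in C$. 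For each $p_j \in P \setminus P_C$ I would instead choose a real-analytic arc $p_j : [0,1) \to M$ which exits $K_i$ before some prescribed time $t_i$, with $t_i \nearrow 1$. All arcs are arranged to have pairwise disjoint graphs in $[0,1) \times M$, to avoid $\bigcup_{l \in I} C_l$, and to avoid $C$ except at the designated endpoints of the first family.

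Next I would run the inductive scheme of Section \ref{sec:FTC} on the parameter intervals $[t_{i-1}, t_i]$. At step $i$ I transport the Weierstrass representation $2 \partial u_t = \theta_t = F_i(t, \cdot)\, \theta$ along the holomorphic flow of the local vector fields $V_j = \partial/\partial t + \dot p_j(t)\, \partial/\partial z$, invoke the Oka principle for maps into the Oka manifold $Y = \bA_* \cup Q$ together with Lemma \ref{lem:3.2} to produce a full abelian differential $\theta_t$ with the correct pole data at each $p_j(t)$ and approximating $\theta_{t_{i-1}}$ on $K_{i-1}$, and finally use Proposition \ref{pr:multipliers} to kill the remaining real periods. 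The algorithm is insensitive to whether an individual arc is eventually constant or diverges: for arcs in the first family, once $p_j(t)$ reaches its target in $C$ the pole can simply stay there; for arcs in the second family, the divergence condition guarantees that every compact set is eventually disjoint from the moving poles, so successive approximations can be controlled uniformly on $K_{i-1}$.

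Taking the limit as $t \to 1^-$ (after a reparametrization) produces an isotopy $u_t \in \CMI_\s^\f(M, \R^n)$ with $u_0 = u$ whose limit $u_1$ has polar set exactly $C$, and fullness is preserved along the isotopy because every $u_t$ approximates the full map $u_0$ on $K_0$, as at the end of the proof of Theorem \ref{th:sing}. The main technical point to verify, and the place where this corollary is more delicate than Corollary \ref{co:branched-2}, is the preservation of the residues at the finite-endpoint poles along the isotopy: the flow by $V_j$ preserves them automatically, the Oka-theoretic approximation $F_i$ can be chosen to preserve them via matching to order $n_j$ along each $\Gamma_j$ (cf.\ condition (I) in Lemma \ref{lem:3.2}), and Proposition \ref{pr:multipliers} preserves them because $h_t - 1$ is arranged to vanish to order $n_j$ at each $p_j(t)$. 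This ensures that $\Re(\theta_t)$ remains exact on $M$ at every stage, so that the Weierstrass formula \eqref{eq:W-formula} integrates $\theta_t$ to an honest element of $\CMI_\s^\f(M, \R^n)$ with exactly the desired polar behaviour.
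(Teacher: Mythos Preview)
Your proposal is correct and matches the paper's proof, which is not spelled out beyond the assertion that the corollary follows by a straightforward modification of the proof of Theorem \ref{th:sing}. The only slip is that a single real-analytic arc on $[0,1]$ cannot be nontrivially ``eventually constant''; since analyticity of the paths is only used step by step (on each $[t_{i-1},t_i]$, for the local flow and the holomorphic extension to $D_i$), you should take the $P_C$-arcs to be piecewise analytic across the step intervals and constant at their targets in $C$ from some step on, which is exactly how your remark ``the pole can simply stay there'' is made rigorous.
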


%%%%%%%%%%
%%%%%%%%%%
%%%%%%%%%%   THANKS
%%%%%%%%%%
%%%%%%%%%%

\subsection*{Acknowledgements}
Alarc\'on is partially supported by the State Research Agency (AEI) via the 
grant no.\ PID2023-150727NB-I00, and the ``Maria de Maeztu'' Unit 
of Excellence IMAG, reference CEX2020-001105-M, 
funded by MICIU/AEI/10.13039/501100011033 and ERDF/EU, Spain.
Forstneri\v c is supported by the European Union 
(ERC Advanced grant HPDR, 101053085) 
and grants P1-0291 and N1-0237 from ARIS, Republic of Slovenia. 

%
% Bibliography
%
%{\bibliographystyle{abbrv} \bibliography{references}}

\end{document}